\newtheorem{manualconjectureinner}{Conjecture}
\newenvironment{manualconjecture}[1]{%
  \begin{manualconjectureinner}%
}{%
  \end{manualconjectureinner}%
}
\newtheorem{theorem}{Theorem}[section]
\newtheorem{lemma}[theorem]{Lemma}
\newtheorem{proposition}[theorem]{Proposition}
\newtheorem{corollary}[theorem]{Corollary}
\newtheorem*{conjecture*}{Conjecture}
\theoremstyle{definition}
\newtheorem{definition}[theorem]{Definition} 
\theoremstyle{remark}
\newtheorem{remark}[theorem]{Remark}
\newtheorem{example}[theorem]{Example}
\DeclareMathOperator{\ch}{char}
\DeclareMathOperator{\Sp}{Spec}
\DeclareMathOperator{\vol}{vol}
\DeclareMathOperator{\Diff}{Diff}
\DeclareMathOperator{\coeff}{coeff}
\DeclareMathOperator{\re}{red}
\DeclareMathOperator{\Supp}{Supp}
\newcommand{\pr}{{\mathbb{P}}}
\newcommand{\Q}{{\mathbb{Q}}}
\newcommand{\R}{{\mathbb{R}}}
\newcommand{\C}{{\mathbb{C}}}
\newcommand{\oo}{{\mathcal{O}}}
\newcommand{\h}{\widehat}
\newcommand{\p}{\pi}
\newcommand{\ta}{\tau}
\newcommand{\D}{\Delta}
\newcommand{\lr}{\longrightarrow}
\newcommand{\ol}{\overline}
\title{Stable Reduction via the Log Canonical Model}
\author{Tai-Hsuan Chung}
\subjclass[2020]{14D06, 14E30, 14G17, 14J17}
\keywords{Stable reduction, moduli, positive characteristic.}
\address{Department of Mathematics, University of California San Diego, 9500 Gilman Drive \# 0112, La Jolla, CA  92093-0112, USA}
\email{t2chung@ucsd.edu}
\begin{document}

\begin{abstract}
We formulate a stable reduction conjecture that extends Deligne--Mumford's stable reduction to higher dimensions and provide a simple proof that it holds in large characteristic, assuming two standard conjectures of the Minimal Model Program. As a result, we recover the Hacon--Kovács theorem on the properness of the moduli stack $\overline{\mathscr{M}}_{2,v,k}$ of stable surfaces of volume $v$ defined over $k=\overline{k}$, provided that $\operatorname{char}k>C(v)$, a constant depending only on $v$.
\end{abstract}

\maketitle

\section{Introduction}

Moduli spaces of varieties are a central theme in algebraic geometry. One fundamental property of these moduli spaces is their properness, which is equivalent, via the valuative criterion of properness, to \emph{stable reduction} of varieties. In this note, we make some observations that offer a natural perspective on stable reduction in any dimension. In fact, our observations lead to the formulation of a stable reduction conjecture that extends the statement of Deligne--Mumford's stable reduction theorem \cite[Corollary 2.7]{DM69} to higher dimensions, with stability aligned with that introduced by Kollár--Shepherd-Barron \cite{KSB88} and Alexeev \cite{Ale96}. We formulate the \textbf{Stable Reduction Conjecture} as follows:

\phantomsection
\begin{manualconjecture}{SR\(_n\)}\label{SRn}
Let $R$ be a discrete valuation ring with fraction field $K$ and algebraically closed residue field $k$. Let $X_K$ be a projective, geometrically normal, and geometrically connected variety of dimension $n$ over $K$, and let $\D_K$ be an effective divisor on $X_K$ such that $K_{X_K}+\D_K$ is log canonical and ample over $K$. Then there exists a finite field extension $L$ of $K$ and a stable log model $(X',\D')$ over $R_L$, the integral closure of $R$ in $L$, with generic fibre $\simeq (X_K,\D_K)\times_KL$.
\end{manualconjecture}

In other words, \hyperref[SRn]{Conjecture SR\(_n\)} asserts that any such pair $(X_K,\D_K)$ has \emph{stable reduction} after a finite field extension $L/K$. This means, in our terminology, that after the extension $L/K$, there exists a log canonical model (a log canonical pair $(X,\D+X_k)$ such that $K_X+\D$ is ample) over the integral closure $R_L$ of $R$ in $L$, with central fibre $X_k$ reduced and generic fibre $(X_L,\D_L)\simeq (X_K,\D_K)\times_KL$ (c.f. Definitions \ref{slm} and \ref{asr}). 

\hyperref[SRn]{Conjecture SR\(_n\)} was established in any dimension $n$ over $k=\C$ by \cite[Corollary 1.5]{HX13}; see also \cite{Bir12}. We note that \cite[Section 7]{HX13} explains how stable reduction for demi-normal $X_K$ can be deduced from stable reduction for normal $X_K$ via Kollár's gluing theory \cite[Chapter 5]{Kol13}. This method is also illustrated in the proof of \ref{SR2}.

Our main Theorem \ref{SRn1} shows that \hyperref[SRn]{Conjecture SR\(_n\)} holds in $\ch k>C(n,v,I)$, where $C(n,v,I)$ depends only on $n=\dim X_K$, the volume $v=(K_{X_K}+\D_K)^n$, and a DCC set $I$ containing the coefficients of $\D_K$, assuming \hyperref[LCM]{Conjecture LCM\(_{n+1}\)} (existence of LCM) and \hyperref[Vn]{Conjecture V\(_n\)} (boundedness of volume) stated below. 

\phantomsection
\begin{manualconjecture}{LCM\(_{n+1}\)}\label{LCM}
With the notation in \hyperref[SRn]{Conjecture SR\(_n\)}, there exists a log canonical model $(X,(X_k)_{\re}+\D)$ over $\Sp R$ with generic fibre $\simeq (X_K,\D_K)$.
\end{manualconjecture}

Following \cite[Definition 1.5]{Pat17}, we say that a pair $(Y,B)$ is a \emph{stable log variety} if $Y$ is a projective variety over an algebraically closed field, with $K_Y+B$ semi-log canonical and ample. In the case where $Y$ is normal, $K_Y+B$ is log canonical and ample. The volume of a stable log variety $(Y,B)$ is $\vol(K_Y+B)=(K_Y+B)^{\dim Y}.$ For our purposes, we formulate the following

\phantomsection
\begin{manualconjecture}{V\(_n\)}\label{Vn}
Fix $n>0$ and a DCC set $\mathscr{C}$. Then the set \{$\vol(K_Y+B)$\}, where $(Y,B)$ is an $n$-dimensional normal stable log variety defined over an algebraically closed field $k$ with $\coeff(B)\subseteq\mathscr{C}$, has a minimal element $v(n,\mathscr{C})$ depending only on $n$ and $\mathscr{C}$, i.e. $\vol(K_Y+B)\ge v(n,\mathscr{C})>0$ for every such $(Y,B)$.
\end{manualconjecture}

Here, we say that $\mathscr{C}\subset\R$ is a \emph{DCC set} if every non-increasing sequence $(a_i)_{i\in\mathbb{N}}$ of elements in $\mathscr{C}$ stabilizes for $i\gg0$. \hyperref[Vn]{Conjecture V\(_n\)} holds in dimension $n=2$ as a special case of a more general result established in \cite[Theorem 8.2]{Ale94} and \cite[Theorem 2]{HK16}. We apply \hyperref[Vn]{Conjecture V\(_n\)} with $\mathscr{C}=D(I)$ for a DCC set $I$, where $D(I)$ is defined in Lemma \ref{MP}. We can now state our main theorem:

\begin{theorem}\label{SRn1}
Fix $n,v>0$ and a DCC set $I\subset(0,1]$ that contains $1$. Assume that \hyperref[LCM]{Conjecture LCM\(_{n+1}\)} and \hyperref[Vn]{Conjecture V\(_n\)} hold. If $\ch k>\frac{v}{v(n,D(I))}$, a constant depending only on $n$, $v$, and $I$, $\vol(K_{X_K}+\D_K)\le v$, and $\coeff(\D_K)\subseteq I$, then \hyperref[SRn]{Conjecture SR\(_n\)} holds.
\end{theorem}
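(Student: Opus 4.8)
\emph{Strategy and Step 1.} The plan is to reduce, by a suitably chosen \emph{tame} base change, to the case in which the central fibre of the relative log canonical model is already reduced; \hyperref[LCM]{Conjecture LCM\(_{n+1}\)} is used twice and \hyperref[Vn]{Conjecture V\(_n\)} once, the latter to bound the multiplicities of that fibre below $\ch k$. First, apply \hyperref[LCM]{Conjecture LCM\(_{n+1}\)} to $(X_K,\D_K)$ to obtain a log canonical model $(X,(X_k)_{\re}+\D)$ over $\Sp R$ with generic fibre $\simeq(X_K,\D_K)$: thus $X\to\Sp R$ is projective, $(X,(X_k)_{\re}+\D)$ is log canonical, and $D:=K_X+\D$ is $\Q$-Cartier and relatively ample. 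Write the scheme-theoretic central fibre as $X_k=\sum_i m_iE_i$, with $E_i$ the prime components and $m_i\ge1$. Since $\oo_X(X_k)\cong\oo_X$, the sheaf $\oo_{X_k}$ carries a finite filtration whose graded pieces are line bundles on the $E_i$, with $E_i$ occurring $m_i$ times; comparing leading terms of Hilbert polynomials on the flat family $X/\Sp R$ gives
\[
v_0:=\vol(K_{X_K}+\D_K)=\bigl(D|_{X_k}\bigr)^n=\sum_i m_i\,\bigl(D|_{E_i}\bigr)^n,
\]
with each $\bigl(D|_{E_i}\bigr)^n>0$ because $D|_{E_i}$ is ample.

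\emph{Step 2: bounding the multiplicities.} By Lemma \ref{MP}, for every $i$ one has $\bigl(D|_{E_i}\bigr)^n\ge v(n,D(I))$. Essentially: $D|_{E_i}$ is ample, and an adjunction computation along $E_i$---using the relative ampleness of $K_X+\D$ together with the $\Q$-triviality of $X_k$ over $\Sp R$ (so that $X_k|_{E_i}\sim_\Q0$)---produces on the normalization $\widetilde E_i$ a semi-log canonical boundary $B_i$ with $K_{\widetilde E_i}+B_i\sim_\Q$ (the pullback of $D|_{E_i}$) and $\coeff(B_i)\subseteq D(I)$; hence $\bigl(D|_{E_i}\bigr)^n=\vol(K_{\widetilde E_i}+B_i)\ge v(n,D(I))$ by \hyperref[Vn]{Conjecture V\(_n\)} applied with $\mathscr C=D(I)$. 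Combining this with the displayed identity,
\[
v\ge v_0=\sum_i m_i\,\bigl(D|_{E_i}\bigr)^n\ge m_i\,v(n,D(I))\qquad\text{for every }i,
\]
so $m_i\le v/v(n,D(I))<\ch k$ by hypothesis. In particular $\ch k$ divides no $m_i$.

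\emph{Step 3: tame base change and conclusion.} Let $e=\operatorname{lcm}_i(m_i)$; by Step 2 every $m_i<\ch k$, so $e$ is prime to $\ch k$. Fixing a uniformizer $\varpi$ of $R$, set $L=K(\varpi^{1/e})$, a finite separable extension of $K$ (separable since either $\ch K=0$, or $\ch K=\ch k$ and $\ch k\nmid e$), and let $R_L$ be the integral closure of $R$ in $L$; then $R_L$ is a discrete valuation ring with residue field again $k=\bar k$ and ramification index $e$ over $R$. The hypotheses of \hyperref[SRn]{Conjecture SR\(_n\)} persist after the ground field extension (as $X_K$ is geometrically normal), so \hyperref[LCM]{Conjecture LCM\(_{n+1}\)} over $\Sp R_L$ yields a log canonical model $(X',(X'_k)_{\re}+\D')$ of $(X_K,\D_K)\times_KL$ over $\Sp R_L$. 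Because the cover $\Sp R_L\to\Sp R$ is tamely ramified and $e$ is divisible by every $m_i$, the standard local analysis of tamely ramified cyclic covers shows that the normalization of $X\times_RR_L$ has reduced central fibre; and since forming the relative log canonical model over a discrete valuation ring preserves reducedness of the central fibre, $X'_k$ is reduced. Hence $(X',(X'_k)_{\re}+\D')=(X',X'_k+\D')$ is log canonical with $K_{X'}+\D'$ ample over $R_L$ and $X'_k$ reduced, i.e. $(X',\D')$ is a stable log model over $R_L$ with generic fibre $\simeq(X_K,\D_K)\times_KL$. This establishes \hyperref[SRn]{Conjecture SR\(_n\)}.

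\emph{The main obstacle} is Step 2. That $D|_{E_i}$ is ample is immediate, but exhibiting on $\widetilde E_i$ a semi-log canonical boundary $B_i$ with $K_{\widetilde E_i}+B_i\sim_\Q$ (the pullback of $D|_{E_i}$) \emph{and} with coefficients in a single DCC set depending only on $I$ requires a careful adjunction argument controlling how the fibre multiplicities $m_j$ contribute to the different---this is exactly the content of Lemma \ref{MP}, and it is what makes \hyperref[Vn]{Conjecture V\(_n\)} applicable. A secondary, more routine point is the reducedness assertion in Step 3, namely that a tame base change followed by passage to the relative log canonical model produces a reduced central fibre.
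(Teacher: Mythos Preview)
Your overall strategy matches the paper's: use \hyperref[LCM]{LCM\(_{n+1}\)} to get the log canonical model, bound the multiplicities of its central fibre via adjunction and \hyperref[Vn]{V\(_n\)}, then make a tame base change of degree the lcm of those multiplicities. However, there are two genuine issues.

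\textbf{First (minor but real):} the divisor you call $D=K_X+\D$ need not be $\Q$-Cartier. What \hyperref[LCM]{LCM\(_{n+1}\)} provides is that $K_X+\D+(X_k)_{\re}$ is $\Q$-Cartier and ample over $\Sp R$; the reduced fibre $(X_k)_{\re}$ itself need not be $\Q$-Cartier, so you cannot simply subtract it. The paper works throughout with $K_X+\D+(X_k)_{\re}$: it restricts to $K_{X_K}+\D_K$ on the generic fibre (so the volume identity in your Step~1 still holds), and adjunction along $E_i$ is then the standard one for a coefficient-$1$ component, giving $(K_X+\D+(X_k)_{\re})|_{E_i^\nu}=K_{E_i^\nu}+\Diff_{E_i^\nu}(\D+(X_k)_{\re}-E_i)$. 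Your appeal to ``$X_k|_{E_i}\sim_\Q 0$'' does not repair this, since $X_k$ contributes $E_i$ with coefficient $m_i$, not $1$. You also mis-attribute the containment $\coeff(B_i)\subseteq D(I)$ to Lemma~\ref{MP}; that lemma only asserts $D(I)$ is DCC. The coefficient containment is a separate fact about the different (\cite[4.5(6)]{Kol13}, \cite[16.7]{F&A}), and both ingredients are needed to invoke \hyperref[Vn]{V\(_n\)}.

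\textbf{Second (substantive):} Step~3 has a gap. You invoke \hyperref[LCM]{LCM\(_{n+1}\)} a second time over $R_L$ to produce $(X',(X'_k)_{\re}+\D')$, note that the normalization $X''$ of $X\times_RR_L$ has reduced central fibre, and then assert that ``forming the relative log canonical model preserves reducedness of the central fibre'' to conclude $X'_k$ is reduced. This last assertion is unjustified: a priori the LCM $X'$ produced by the conjecture bears no direct relation to $X''$, and components of $X'_k$ that are exceptional over $X''$ could well have multiplicity $>1$. The paper avoids this detour entirely by showing \emph{directly} that $(X'',g^*\D)$ is already the stable log model. The key input you are missing is Proposition~\ref{kol}: since $g$ is tamely ramified and $K_{X''}+g^*\D+X''_k\sim_\Q g^*(K_X+\D+(X_k)_{\re})$ by Lemma~\ref{bc}, log canonicity of $(X,\D+(X_k)_{\re})$ pulls back to log canonicity of $(X'',g^*\D+X''_k)$; ampleness pulls back as well, and $X''_k$ is reduced by Lemma~\ref{bcq}. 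Thus $(X'',g^*\D)$ is the stable log model, and no second appeal to \hyperref[LCM]{LCM\(_{n+1}\)} is needed. If you insist on your route, you would still need Proposition~\ref{kol} to show $(X'',X''_k+g^*\D)$ is itself an LCM, after which uniqueness of the LCM forces $X'=X''$; either way, Proposition~\ref{kol} is the step your argument is missing.
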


The assumptions of Theorem \ref{SRn1} hold when $n=2$, thus \hyperref[SRn]{Conjecture SR\(_2\)} holds over any algebraically closed field $k$ of sufficiently large characteristic: 

\begin{corollary}[Stable reduction for lc canonically polarized surfaces]\label{SR2lc}
Fix $v>0$ and a DCC set $I\subset(0,1]$ that contains $1$. If everything is defined over an algebraically closed field $k$ of characteristic $>\operatorname{max}\{5,\frac{v}{v(2,D(I))}\}$, with $\vol(K_{X_K}+\D_K)\le v$ and $\coeff(\D_K)\subseteq I$, 
then \hyperref[SRn]{Conjecture SR\(_2\)} holds.
\end{corollary}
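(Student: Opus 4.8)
The plan is to obtain Corollary~\ref{SR2lc} directly from Theorem~\ref{SRn1} applied with $n=2$: it suffices to verify that, for a discrete valuation ring $R$ with algebraically closed residue field $k$ of characteristic $>5$, both \hyperref[LCM]{Conjecture LCM\(_3\)} and \hyperref[Vn]{Conjecture V\(_2\)} hold, and that the characteristic hypothesis of Theorem~\ref{SRn1} is satisfied. The latter is automatic: $\ch k>\max\{5,\tfrac{v}{v(2,D(I))}\}$ in particular gives $\ch k>\tfrac{v}{v(2,D(I))}$, and the extra bound $\ch k>5$ is exactly what lets us invoke the three-dimensional minimal model program. So the content is entirely in justifying the two conjectures in this range.

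\hyperref[Vn]{Conjecture V\(_2\)} holds over an arbitrary algebraically closed field: the set of volumes $\vol(K_Y+B)$, as $(Y,B)$ ranges over normal stable log surfaces with $\coeff(B)$ in a fixed DCC set, satisfies the DCC---so it has a positive minimum---by \cite[Theorem~8.2]{Ale94} and \cite[Theorem~2]{HK16}. Taking the DCC set to be $D(I)$ produces the constant $v(2,D(I))$ appearing in the statement. This step is characteristic-free, so the entire characteristic restriction enters through the next one.

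For \hyperref[LCM]{Conjecture LCM\(_3\)}---the existence over $\Sp R$ of a log canonical model $(X,(X_k)_{\re}+\D)$ of the given $3$-dimensional pair, with generic fibre $(X_K,\D_K)$ satisfying $K_{X_K}+\D_K$ ample---I would proceed as follows. Start from any projective $R$-model of $(X_K,\D_K)$, pass to a log resolution (available for excellent threefolds by Cossart--Piltant), and run a relative minimal model program with scaling over $\Sp R$ for the resulting log smooth pair. The inputs needed---existence of flips and termination of the MMP with scaling for log canonical threefold pairs---are known in characteristic $p>5$ through the work of Hacon--Xu, Birkar, Cascini--Tanaka--Xu and Hashizume--Nakamura--Tanaka. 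Since $K_{X_K}+\D_K$ is ample, the generic fibre $(X_K,\D_K)$ is already its own log canonical model, so the program yields a relative \emph{good} minimal model---abundance for threefolds in characteristic $p>5$, due to Das--Waldron and its relative refinements, then supplies the needed semiampleness---whose relative ample model over $\Sp R$ is the desired log canonical model. This is the positive-characteristic counterpart of what \cite{HX13} establishes over $\C$.

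The main obstacle is precisely \hyperref[LCM]{Conjecture LCM\(_3\)}: it requires assembling the full three-dimensional minimal model program in positive (and possibly mixed) characteristic in the \emph{relative} setting over a discrete valuation ring---existence and termination of flips, and, most delicately, the abundance/semiampleness step---while keeping control of the geometry over the generic point of the special fibre, where the residue field is non-closed and possibly imperfect of characteristic $p$. Granting this together with \hyperref[Vn]{Conjecture V\(_2\)}, Theorem~\ref{SRn1} does the rest---producing, after a finite extension $L/K$, the stable log model over $R_L$---and the threshold $\ch k>5$ is nothing but the current range within which the threefold minimal model program is established.
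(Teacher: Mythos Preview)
Your approach is essentially the paper's: verify \hyperref[Vn]{V\(_2\)} and \hyperref[LCM]{LCM\(_3\)} in characteristic $>5$, then apply Theorem~\ref{SRn1}. Two points where the paper is sharper than your sketch are worth noting. First, rather than assembling flips, termination, and abundance separately, the paper invokes \cite[Theorem~4.11]{HNT20} as a single packaged statement giving the log canonical model of an lc threefold pair over a perfect field of characteristic $>5$; this dissolves the ``main obstacle'' you flag. Second, the paper is explicit about the boundary on the resolution: on a log resolution $\widehat{X}$ (via \cite{CP19}, \cite{CJS20}) one takes $\widehat{\D}$ to be the strict transform of $\D$ \emph{plus the reduced exceptional divisors not contained in $\widehat{X}_k$}, so that $(\widehat{X},\widehat{\D}+(\widehat{X}_k)_{\re})$ is log smooth---hence lc---and $K_{\widehat{X}}+\widehat{\D}+(\widehat{X}_k)_{\re}$ restricts on the generic fibre to a divisor of volume $\vol(K_{X_K}+\D_K)>0$, hence is big over $\Sp R$. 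The generic fibre of the resulting LCM is then the LCM of $(X_K,\D_K)$, which is $(X_K,\D_K)$ itself by uniqueness. Your phrase ``the resulting log smooth pair'' elides this choice; without it one cannot immediately conclude that the generic fibre is recovered.
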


Together with the results of \cite{Pos24} and \cite{ABP23}, Corollary \ref{SR2lc} recovers the Hacon--Kovács Theorem \cite[Theorem 4]{HK16} on the properness of the moduli stack $\overline{\mathscr{M}}_{2,v,k}$ of stable surfaces, and, along with \cite[Theorem 1.2]{Pat17}, establishes the projectivity of the coarse moduli space $\overline{\text{M}}_{2,v,k}$ of stable surfaces, both of which hold over an algebraically closed field $k$ of sufficiently large characteristic:

\begin{theorem}[Hacon--Kovács--Patakfalvi]\label{proper} For any fixed $v_0>0$, any algebraically closed field $k$ of characteristic $>\operatorname{max}\{5,\frac{v_0}{v(2,D(\{1\}))}\}$, and any $v\le v_0$, the moduli stack $\overline{\mathscr{M}}_{2,v,k}$ of stable surfaces of volume $v$ over $k$ is proper, and  the coarse moduli space $\overline{\emph{M}}_{2,v,k}$ of stable surfaces of volume $v$ over $k$ is projective.
\end{theorem}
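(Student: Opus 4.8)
The plan is to combine Corollary \ref{SR2lc} with the structural results on the moduli of stable surfaces already available in the stated range of characteristics: stable reduction together with boundedness and separatedness yields properness, and properness together with ampleness of the CM line bundle yields projectivity of the coarse space. By \cite{ABP23} and \cite{Pos24}, for $\ch k>5$ the moduli stack $\overline{\mathscr{M}}_{2,v,k}$ is an algebraic stack of finite type over $k$ with finite inertia — the finite type property encoding the boundedness of stable surfaces of a fixed volume $v$ — and it is separated, because a stable surface over the fraction field of a discrete valuation ring admits at most one stable model over that ring (uniqueness of semi-log canonical models, via Kollár's gluing theory in positive characteristic). It therefore remains only to verify the existence part of the valuative criterion of properness, and, for an algebraic stack of finite type, it suffices to do so for a complete discrete valuation ring $R$ with fraction field $K$ and algebraically closed residue field $k$: given a stable surface $X_K$ over $K$ with $\vol(K_{X_K})=v\le v_0$, one must produce a finite extension $L/K$ and a stable surface $X'$ over $R_L$ with generic fibre $\simeq X_K\times_K L$.

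To establish this I would reduce to the normal case treated in Corollary \ref{SR2lc}. After replacing $K$ by a finite extension we may assume that the normalization of $X_K$ together with its conductor is defined over $K$; write $\bar\nu\colon\bar X_K\to X_K$ for the normalization and $\bar D_K\subset\bar X_K$ for the conductor, so that $(\bar X_K,\bar D_K)$ is a disjoint union of normal, projective, geometrically connected surfaces over $K$ with $K_{\bar X_K}+\bar D_K$ log canonical and ample, with $\coeff(\bar D_K)\subseteq\{1\}$, and with $\vol(K_{\bar X_K}+\bar D_K)=\vol(K_{X_K})=v\le v_0$. Since $\ch k>\max\{5,v_0/v(2,D(\{1\}))\}\ge\max\{5,v/v(2,D(\{1\}))\}$, Corollary \ref{SR2lc}, applied componentwise with $I=\{1\}$, gives after a further finite extension $L/K$ a stable log model $(\bar X',\bar D')$ over $R_L$ with generic fibre $\simeq(\bar X_K,\bar D_K)\times_K L$. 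One then glues $(\bar X',\bar D')$ back along $\bar D'$ via the involution descending the normalization — exactly as in the proof of \ref{SR2} and as in \cite[Section 7]{HX13}, but now over the base $\Sp R_L$; by the gluing theory of \cite{Pos24}, valid for $\ch k>5$, this produces a demi-normal $X'$ over $R_L$ with $K_{X'}$ ample over $R_L$, reduced and semi-log canonical central fibre, and generic fibre $\simeq X_K\times_K L$, i.e.\ a stable model of $X_K$. This verifies the valuative criterion, so $\overline{\mathscr{M}}_{2,v,k}$ is proper over $k$, which recovers \cite[Theorem 4]{HK16}.

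For the coarse moduli space, since $\overline{\mathscr{M}}_{2,v,k}$ is now known to be a separated algebraic stack of finite type over $k$ with finite inertia, the Keel--Mori theorem supplies a coarse moduli space $\overline{\text{M}}_{2,v,k}$, an algebraic space proper over $k$ (properness of the coarse space following from properness of the stack). By \cite[Theorem 1.2]{Pat17}, the Knudsen--Mumford (CM) line bundle of the universal family descends to an ample line bundle on $\overline{\text{M}}_{2,v,k}$; a proper algebraic space carrying an ample line bundle is a projective scheme, so $\overline{\text{M}}_{2,v,k}$ is projective.

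The step I expect to be the main obstacle is the demi-normal gluing inside the valuative criterion. Reconstructing a semi-log canonical model from the normalization of its generic fibre together with the involution of the conductor, and carrying this out in a family over $\Sp R_L$, is subtle in positive characteristic, where inseparability of the conductor or of its quotient can obstruct the required descent; this is exactly what forces the hypothesis $\ch k>5$ and is provided by \cite{Pos24}. A secondary, purely bookkeeping point is the matching of characteristic bounds: normalizing a stable surface of volume $v$ produces a log surface with reduced boundary and the same volume, so the relevant lower bound is $v(2,D(\{1\}))$, and since $v\le v_0$ the single hypothesis $\ch k>\max\{5,v_0/v(2,D(\{1\}))\}$ covers all $v\le v_0$ at once. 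The remaining ingredients — algebraicity, finite type, separatedness, existence of the coarse space, and ampleness of the CM line bundle — are quoted from \cite{ABP23}, \cite{Pos24}, and \cite{Pat17}.
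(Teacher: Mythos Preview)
Your approach is essentially the paper's: verify the existence half of the valuative criterion by normalizing, applying Corollary~\ref{SR2lc} componentwise with $I=\{1\}$, and gluing back via \cite{Pos24}; then deduce projectivity from \cite[Theorem 1.2]{Pat17}. The paper packages the first two steps as Corollary~\ref{SR2} and then cites it, but the content is the same.

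There is one genuine gap. You assert that the glued family $X'\to\Sp R_L$ has ``semi-log canonical central fibre'', but neither Corollary~\ref{SR2lc} nor the gluing gives this: what you obtain is a stable log model in the sense of Definition~\ref{slm}, i.e.\ the pair $(X',X'_k)$ is slc with $K_{X'}$ ample. This does \emph{not} automatically imply that $X'_k$ itself is demi-normal (in particular $S_2$) with $K_{X'_k}$ slc, which is what is required for $(X'\to\Sp R_L)$ to define a point of $\overline{\mathscr{M}}_{2,v,k}$. In characteristic zero this follows from Du Bois--type arguments, but in positive characteristic it is a separate theorem: the paper isolates it as condition~$(S2)$ in \cite[Theorem 6.0.5]{Pos24} and invokes \cite[Theorem 4.12]{ABP23} (valid for $\ch k>5$) to secure it. You cite \cite{ABP23} only for the structural properties of the stack, so this step is missing from your argument; once you insert it, your proof matches the paper's.
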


For precise definitions of $\overline{\mathscr{M}}_{2,v,k}$ and $\overline{\text{M}}_{2,v,k}$, we refer to \cite[Section 1.3]{Pat17} and \cite[Section 6]{Pos24}. 

A key observation that led to our formulation of stable reduction, and which seems to make the formulation natural, is the following result: it characterizes canonically polarized varieties that have stable reduction after a tame base change. In particular, it provides a generalization of T. Saito's characterization for curves (\cite[Theorem 3]{Sai87}, \cite[10.4.47]{Liu02}, \cite[1.2]{Hal09}) to arbitrary dimensions:

\begin{theorem}[c.f. Theorem \ref{tame}]\label{tam}
Let $Y$ be a projective, geometrically normal, and geometrically connected variety over the fraction field $K$ of a Henselian DVR $R$ with algebraically closed residue field $k$ of characteristic $p>0$, such that $K_Y$ is log canonical and ample. The following are equivalent: 

$(1)$ $Y$ has stable reduction after a tamely ramified extension $L/K$.

$(2)$ There exists a log canonical model $(X,(X_k)_{\re})$ over $\Sp R$ with generic fibre $\simeq Y$ such that every component of $X_k$ has multiplicity prime to $p$.
\end{theorem}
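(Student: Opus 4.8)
The goal is to prove the equivalence of (1) and (2) in Theorem \ref{tam}, characterizing when a canonically polarized variety $Y/K$ acquires stable reduction after a *tame* extension in terms of the multiplicities appearing in a log canonical model over $R$. The natural strategy is to first establish the existence of *some* log canonical model $(X,(X_k)_{\re})$ over $\Sp R$ with generic fibre $Y$ — this is exactly the content of Conjecture LCM (here in the absolute case $\Delta_K=0$), which I would invoke in the form available to us (or, in the surface case, as an unconditional theorem). The key point is that such a log canonical model is *unique*: since $K_X+(X_k)_{\re}$ is log canonical and ample over $\Sp R$ and $X$ is normal, $X = \operatorname{Proj}_R\bigoplus_{m\ge 0} H^0(X, m(K_X+(X_k)_{\re}))$ is canonically determined by $Y$. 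So condition (2) is really a well-defined property of $Y$, and the theorem says it is equivalent to the purely birational/arithmetic property (1).

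**Direction (2) $\Rightarrow$ (1).** Assume the log canonical model $(X,(X_k)_{\re})$ has all components of $X_k$ of multiplicity prime to $p$. I would take $L/K$ a ramified Kummer extension of degree $e$ equal to the lcm of these multiplicities (or the product, clearing denominators) — this is tame because $p\nmid e$ — and form the normalization $X_L$ of $X\times_R R_L$. The classical local computation (a toroidal/"semistable reduction in codimension one" calculation, exactly as in the curve case of Saito) shows that after such a base change the central fibre of the normalization becomes *reduced*: étale-locally near a generic point of a component of multiplicity $m\mid e$, the total space looks like $\operatorname{Spec} R[t]/(t^m - \pi u)$, and base-changing $\pi \mapsto s^e$ and normalizing kills the multiplicity precisely because $p\nmid m$. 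Then I would argue that $(X_L,(X_{L,k})_{\re})$ is still log canonical — tameness is what makes the ramification contribute no extra discrepancy drop, so log canonicity is preserved (this uses that $K_{X_L}+(X_{L,k})_{\re}$ pulls back $K_X+(X_k)_{\re}$ up to the correct boundary, by Riemann–Hurwitz/Abhyankar with $p\nmid e$) — and that $K_{X_L}+(X_{L,k})_{\re}$ remains ample over $R_L$ since ampleness is preserved by finite base change and normalization pulls back ample to ample. With central fibre now reduced, $(X_L,(X_{L,k})_{\re})$ is by definition a stable log model, so $Y$ has stable reduction over $L$, tamely.

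**Direction (1) $\Rightarrow$ (2).** Conversely, suppose $Y$ has stable reduction after a tame extension $L/K$, say of degree $e$ with $p\nmid e$; let $(X',( X'_k)_{\re})$ be the corresponding stable log model over $R_L$, so its central fibre is reduced. I would descend: $G=\operatorname{Gal}(L/K)$ (or, if $L/K$ is not Galois, pass to a Galois closure, which is still tame) acts on $R_L$ over $R$, and by uniqueness of the log canonical model the action lifts to $X'$; the quotient $X'/G$, after normalization, is the log canonical model $X$ over $R$, because $(K_{X'}+(X'_k)_{\re})$ is $G$-equivariant and descends to an ample log canonical polarization downstairs (quotients by tame actions preserve log canonicity and klt-type conditions — here one uses $p\nmid |G|$ crucially, since wild quotients can destroy these properties). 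It then remains to read off the multiplicities of $X_k$ from those of $X'_k$: a component of $X_k$ is the image of a component of $X'_k$, and the multiplicity is governed by the ramification index of $R_L/R$ along that component divided out by the inertia — since the cover is tame and the fibre upstairs is reduced, each multiplicity downstairs divides $e$, hence is prime to $p$.

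**Main obstacle.** The crux — and the step I expect to require the most care — is the $(1)\Rightarrow(2)$ descent: showing that the quotient of the stable log model by the tame Galois action, after normalization, really is the log canonical model over $R$ and that taking the quotient is compatible with reading off multiplicities. This needs (a) uniqueness of the log canonical model to pin down the object, (b) a careful treatment of how the boundary $(X_k)_{\re}$ behaves under the quotient map and its ramification along the central fibre, and (c) the fact that tame quotients preserve log canonicity, which is where the hypothesis $\operatorname{char}k = p \nmid e$ is indispensable (in the wild case the statement genuinely fails, matching Saito's picture for curves). The local toroidal computation in $(2)\Rightarrow(1)$ is standard but must be done with enough uniformity to cover non-reduced and possibly non-normal-crossing central fibres; I would phrase it via the normalization of a cyclic cover and cite the codimension-one semistable reduction lemma.
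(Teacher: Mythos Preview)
Your approach is essentially the same as the paper's: $(2)\Rightarrow(1)$ via a tame cyclic base change of degree the lcm of the multiplicities followed by normalization, and $(1)\Rightarrow(2)$ via the quotient of the stable model by the Galois action. A few minor corrections: you do not need Conjecture LCM as an input anywhere---in $(2)\Rightarrow(1)$ the existence of the LCM is part of the hypothesis, and in $(1)\Rightarrow(2)$ you \emph{construct} it as $X'/G$, so your opening paragraph is a red herring; the extension $L/K$ is automatically cyclic Galois (Lemma~\ref{cyclic}: Henselian base, algebraically closed residue field, degree prime to $p$), so no passage to a Galois closure is needed; and the geometric quotient $X'/G$ is already normal, so no further normalization is required. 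Where you invoke ``uniqueness of the log canonical model'' to lift the $G$-action from $X'_L$ to $X'$, the paper instead spells out the standard direct argument (closure of the graph of $\mathrm{Id}\times\sigma$ in $X'\times X'$, then negativity of contraction using ampleness of $K_{X'}+\Delta'$, following \cite[11.40]{Kol23b})---but this is precisely how one proves such uniqueness, so the difference is one of packaging rather than content.
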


We will prove a log version Theorem \ref{tame} of Theorem \ref{tam}. Recall that for families of curves, a component of the central fibre is called \emph{principal} if it is either a curve of genus at least one or a rational curve with at least three marked points. Theorem \ref{tam} gives a generalization of the notion of principal components to higher dimensions: those components that stay in the log canonical model $(X,(X_k)_{\re})$ should be considered principal. When there is a principal component with multiplicity divisible by $p$, Theorem \ref{tam} implies that a base change of degree divisible by $p$ is necessary to achieve stable reduction (Corollary \ref{degp}). However, we do not know what is the right base change to make. We discuss an example (\ref{Artin}) regarding this.

\begin{remark}
Our assumptions on the characteristic is only for the residue field $k$. Thus the results hold in both mixed and equi-characteristic.
\end{remark}

\begin{remark}
In contrast to the stable reduction discussed above, \emph{semi-stable reduction} in the sense of \cite{KKMS73} asserts that given a family $f\colon X\lr T\ni0$ of varieties over a regular curve $T$ such that $f$ is smooth over $T-0$, there exists a finite base change $g\colon T'\lr T$ with $g^{-1}(0)=0'$, a regular variety $X'$, and a projective birational morphism $X'\lr X\times_TT'$ such that the induced morphism $f'\colon X'\lr T'$ has central fibre $f'^*(0')$ reduced with simple normal crossing singularities. Therefore, stable reduction in our sense is much weaker than semi-stable reduction. We do not prove semi-stable reduction in this note. 
\end{remark}

\emph{Previous results.} Stable reduction for curves was first formulated and proved in \cite[Corollary 2.7]{DM69} in any characteristic, while it remains conjectural in positive characteristic for higher dimensions. When the residue characteristic $\ch k$ is sufficiently large, stable reduction has been established for semi-log canonical canonically polarized surfaces by \cite[Theorem 4]{HK16}, and for smooth canonically polarized surfaces by \cite[Theorem 10.6]{BMP23}. The proof of the former result uses ultraproducts to reduce the problem to semi-stable reduction in characteristic $0$, while neither the bound on $\ch k$ nor the base change used is explicit in their proof. The proof of the latter result has not been generalized to the non-smooth case. Our proof of Theorem \ref{SRn1} explicitly describes both how the bound on $\ch k$ depends on $v$ and the base change used. For example, in the boundary-free case for surfaces ($\D_K=0$ and $n=2$ in \hyperref[SRn]{Conjecture SR\(_n\)}), our bound $\operatorname{max}\{5,\frac{v}{v(2,D(\{1\}))}\}$ applies to every such surface with volume at most $v$.

We now discuss the ideas behind Theorem \ref{SRn1}. The stable reduction of Deligne--Mumford \cite[Corollary 2.7]{DM69} can be interpreted as the process of constructing a family $X\lr T$ of curves, with generic fibre the given curve, such that the central fibre $X_k$ is reduced, the pair $(X,X_k)$ is log canonical, and $K_X$ is ample. Therefore, singularities of the Minimal Model Program naturally arise in the context of stable reduction, suggesting a study of the singularities of the total space $X$ together with its central fibre $X_k$. Thus we study the pair $(X,(X_k)_{\re})$ and compare its singularities under base change. For this, the key input is Proposition \ref{kol}, which effectively controls the singularities when the base change has only tame ramifications. With this in mind, we observe that by passing to a model $X$ where $K_X+(X_k)_{\re}$ is ample over the base $T$---namely, the log canonical model $(X,(X_k)_{\re})$ over $T$---and by making a tame base change $T'\lr T$ that reduces all the multiplicities of the components of $X_k$ to one, we achieve our goal: the normalization $X'$ of $X\times_TT'$ will have $X'_k$ reduced, $(X',X'_k)$ log canonical, and $K_{X'}$ ample over $T'$. The base change needed will be tame if every component of $X_k$ has multiplicity prime to the residue characteristic $p$, which is a strong restriction. Nevertheless, this situation can be achieved when $p$ is large compared to the volume $v$ of the generic fibre, by using a boundedness result. In fact, writing the central fibre as $X_k=\sum_im_iF_i$, where each $F_i$ is a component of $X_k$ with multiplicity $m_i$, we have
\begin{equation}\label{v}
v=\vol(X_k)=\sum_im_i\vol(K_{F_i}+B_i)    
\end{equation}
where $B_i$ is the intersection of $F_i$ with the rest of the central fibre. By the boundedness theorem \cite[Theorem 8.2]{Ale94} and \cite[Theorem 2]{HK16}, there is a fixed number $v_{\text{min}}>0$ such that $\vol(K_{F_i}+B_i)\ge v_{\text{min}}$ for any $i$, combining with (\ref{v}) gives
$$
\frac{v}{v_{\text{min}}}\ge\sum_im_i\ge m_i.
$$
Therefore, if $\ch k=p>\frac{v}{v_{\text{min}}}$, all the multiplicities $m_i$ are less than $p$, the base change needed will be tame, and hence the approach above works. 

The above approach provides a new perspective on stable limits. Recall that \cite{KSB88} compactifies the moduli space of varieties of general type as follows: Start with a smooth family $X_0\longrightarrow T_0$ of varieties over an affine curve $T_0$. First apply a finite base change $T'_0\longrightarrow T_0$ so that the family extends to a semi-stable family $X'\longrightarrow T'$ over a proper curve $T'\supset T'_0$ by \cite{KKMS73}; then pass to the relative canonical model over $T'$. Our approach above suggests that we can achieve the same result by reversing the steps: first pass to the log canonical model, and then make a base change $t\lr t^N$. This yields the stable model in characteristic $0$, and it also works in large characteristic by using the boundedness theorem of \cite{Ale94} and \cite{HK16}. In particular, once we have the log canonical model, we obtain a clearer picture of what the stable limit looks like (Corollary \ref{SL}). The same circle of ideas, together with the main theorem of \cite{BCHM10}, shows the existence of the log canonical model $(X,(X_k)_{\re}+\D)$ over $\C$, assuming $(X,\D)$ is klt away from the central fibre $X_k$ (Corollary \ref{lcm0}). This provides a proof of \hyperref[SRn]{Conjecture SR\(_n\)} over $k=\C$ for the case where $(X_K,\D_K)$ is klt (Corollary \ref{SRn0}). 

We should mention that Quentin Posva kindly contacted us and mentioned that he has a new approach to prove semi-stable reduction for curves and has ideas for the surface case. Our approaches seem to be different.

\emph{Structure of the note.} In section $2$, we fix the terminology and introduce our definition of stable reduction. In section $3$, we prove the results. Some of the lemmas used in the proofs are collected in the Appendix.

\textbf{Acknowledgements:} I would like to express my deep gratitude to my advisor, Professor James M\textsuperscript{c}Kernan, for suggesting the problem, numerous inspiring discussions, careful reading of several versions of this note, his feedback,
and his constant support and encouragement. I am especially grateful to Professor M\textsuperscript{c}Kernan for sharing a strategy for proving semi-stable reduction for surfaces, which inspired this work. I am deeply indebted to Professor Michael McQuillan for pointing out serious mistakes in my earlier ideas, for many valuable discussions, for sharing his preprint, and for his encouragement. I am very grateful to Jacob Keller for numerous helpful discussions, valuable comments, and encouragement. I would like to thank Professors Rusiru Gambheera, Joaquín Moraga, Qingyuan Xue, and Ziquan Zhuang for answering my questions and for their encouragement, as well as Professors Chen-Yu Chi, Bochao Kong, David Stapleton, Joe Waldron, Chin-Lung Wang, and Chenyang Xu for helpful conversations and encouragement. I thank Rahul Ajit, Fernando Figueroa, and Chung-Ming Pan for encouragement. Finally, I thank my family for their support, especially my wife, Wei-Na. I was partially supported by DMS-1802460 and a grant from the Simons Foundation.

\section{Terminology}

Here we fix the terminology and notation. We primarily follow the terminology from \cite{KM98}, \cite{Kol13}, \cite{Kol23b}, and \cite{ABP23}, with slight modifications to better suit our purposes.

Throughout the note, following \cite{ABP23}, unless otherwise stated, we work over a fixed base ring, which is assumed to be Noetherian, excellent, of finite Krull dimension, admitting a dualizing complex, and of pure dimension. Furthermore, $X$, $Y$, and $Z$ are always assumed to be quasi-projective and equidimensional schemes over this base ring. These conditions ensure that the dualizing sheaf $\omega_X$ exists, and that each pair $(X,D)$ (defined below) has a canonical divisor $K_X$, with $X$ being regular at all the generic points of $\Supp K_X$, such that $\oo_X(K_X)\simeq\omega_X$.

A \emph{variety} is a reduced connected equidimensional scheme that is separated and of finite type over a field (which is not necessarily algebraically closed). Thus for us a variety could be reducible. A curve (resp. surface, resp. threefold) is a variety of dimension one (resp. two, resp. three). 

A reduced scheme is \emph{demi-normal} if it satisfies Serre's condition $S_2$ and its codimension 1 points are either regular points or nodes. A Weil $\Q$-divisor $D$ on $X$ is called a \emph{Mumford} $\Q$-\emph{divisor} if $X$ is regular at all the generic points of $\Supp D$. A \emph{pair} $(X,D)$ consists of a demi-normal (quasi-projective and equidimensional) scheme $X$ and an effective Mumford $\Q$-divisor $D$ such that $K_X+D$ is $\Q$-Cartier. If $(X,D)$ is a pair with $X$ normal, for a proper birational morphism $\phi\colon Z\lr X$ of normal schemes and a $\phi$-exceptional divisor $E$, the \emph{discrepancy} $a(E,X,D)$ \emph{of} $E$ \emph{with respect to} $(X,D)$ is defined as $\coeff_E(K_Z-\phi^*(K_X+D))$. For non-exceptional divisors $E\subset X$, set $a(E,X,D):=-\coeff_E D$. We say that a pair $(X,D)$, or $K_X+D$, with $X$ normal, is \emph{log canonical} or \emph{lc} (resp. \emph{klt}) if $a(E,X,D)\ge-1$ (resp. $a(E,X,D)>-1$) for every divisor $E$ over $X$. Note that $a(E,X,D)$ can be defined more generally, including the case where $(X,D)$ is a pair as defined above (cf. \cite[2.4]{Kol13}). A pair $(Y,B)$, or $K_Y+B$, is \emph{semi-log canonical (slc)} if $\nu^*(K_Y+B)=K_{\widetilde{Y}}+\widetilde{D}+\widetilde{B}$ is log canonical, where $\widetilde{Y}\overset{\nu}{\lr}Y$ is the normalization, $\widetilde{D}$ is the conductor on $\widetilde{Y}$, and $\widetilde{B}$ is the divisorial part of $\nu^{-1}(B)$. 

Let $T$ be the spectrum of a field or a Dedekind domain. A \emph{pair} $(X,D)$ \emph{over} $T$ consists of a pair $(X,D)$ and a proper flat morphism $X\lr T$. We say that a pair $(X,D)$ over $T$ is a \emph{semi-log canonical model} over $T$ if $K_X+D$ is slc and ample over $T$. If, in addition, $X$ is normal, then $(X,D)$ is a \emph{log canonical model} (\emph{LCM}) over $T$. 

For a flat morphism $X\lr T$ of finite type, we denote by $X_t$ the fibre over $t\in T$. When $T$ is the spectrum of a discrete valuation ring (DVR) $R$ with fraction field $K$ and residue field $k$, we denote by $X_K$ the generic fibre and by $X_k$ the central fibre. By a \emph{component} of $X_k$ we always mean an irreducible component. We say that a $\Q$-divisor $\D$ on $X$ \emph{satisfies condition} $(\ast)$ if for every $t\in T$, the support $\Supp\D$ does not contain any irreducible component of $X_t$, and none of the irreducible components of $X_t\cap\Supp\D$ is contained in the singular locus of $X_t$. Thus the restriction of $\D$ to $X_t$ makes sense. If $K$ is the function field of $T$, the restriction of $\D$ to the generic fibre is denoted by $\D_K$.

Let $T$ be the spectrum of a Dedekind domain. A \emph{family of varieties} $f\colon X\lr T$ is a proper flat morphism such that for every $t\in T$, the fibre $X_t$ is equidimensional, geometrically reduced, and geometrically connected. A \emph{family of pairs} $f\colon(X,\D)\lr T$ is a family of varieties $f\colon X\lr T$ together with an effective Mumford $\Q$-divisor $\D$ on $X$ satisfying condition $(\ast)$ defined above, such that $(X,\D+X_t)$ is a pair for every closed point $t\in T$. 

\vspace{3mm}

We introduce our definition of stable reduction in the following:

\begin{definition}\label{slm} Let $R$ be a Dedekind domain with closed points $r_1,\cdots,r_s$. A family of pairs $f\colon(X,\D)\lr\Sp R$, with geometric generic fibre demi-normal, is a \emph{stable log model} if  $(X,\D+X_{r_1}+\cdots+X_{r_s})$ is a semi-log canonical model over $\Sp R$. Equivalently, $(X,\D+X_{r_i})$ is slc for every $r_i$, and $K_X+\D$ is ample over $\Sp R$. 
\end{definition}

\begin{definition}\label{asr}
Let $Y$ be a projective, geometrically demi-normal, and geometrically connected variety over the fraction field $K$ of a DVR $R$ with algebraically closed residue field, and let $\D_Y$ be an effective divisor on $Y$ such that $K_Y+\D_Y$ is slc and ample over $K$. We say that $(Y,\D_Y)$ \emph{has stable reduction} after a finite field extension $L/K$ if there exists a stable log model $(X,\D)$ over $\Sp R_L$ with generic fibre $(X_L,\D_L)\simeq(Y,\D_Y)\times_KL$, where $R_L$ is the integral closure of $R$ in $L$.
\end{definition}

\begin{remark}
When $Y$ is smooth of dimension $1$ and $\D_Y=0$, Definition \ref{slm} aligns with \cite[1.1]{DM69}, and Definition \ref{asr} aligns with \cite[2.2]{DM69}. In general, Definition \ref{slm} corresponds to the notion of stability introduced by \cite{KSB88} and \cite{Ale96}, and is equivalent to the definition of a \emph{stable family} given in \cite[4.11]{ABP23}. 
\end{remark}

\begin{remark}
In both Definitions \ref{slm} and \ref{asr}, the assumption of geometric demi-normality is necessary for the notion of stable log model to be \emph{stable} after any finite field extensions, as the generic fibre of a stable log model is demi-normal. 
\end{remark}

\section{Proofs}

We begin with a characteristic-free characterization of log canonically polarized varieties that have stable reduction after a tamely ramified base change:

\begin{theorem}\label{tame}
Let $Y$ be a projective, geometrically normal, and geometrically connected variety over the fraction field $K$ of a Henselian DVR $R$ with algebraically closed residue field $k$, and let $\D_Y$ be an effective divisor on $Y$ such that $K_Y+\D_Y$ is log canonical and ample over $K$. Fix an integer $N>0$ not divisible by $\ch k$. The following are equivalent:

$(1)$ $(Y,\D_Y)$ has stable reduction after a field extension $L/K$ of degree $N$.

$(2)$ There exists a log canonical model $(X,\D+(X_k)_{\re})$ over $\Sp R$ with generic fibre $\simeq(Y,\D_Y)$ such that the least common multiple $l$ of the multiplicities of all the components of $X_k$ divides $N$.

In particular, $l$ is the minimal degree of a tame base change for which $(Y,\D_Y)$ has stable reduction. 
\end{theorem}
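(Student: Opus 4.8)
The plan is to establish $(1)\Leftrightarrow(2)$ via two base-change constructions that are mutually inverse, with the tame ramification formula of Proposition~\ref{kol} as the common engine, and then to read off the ``minimal degree'' statement formally. I will lean on two standing facts. (a) Since $R$ is Henselian with algebraically closed residue field and $\ch k\nmid N$, there is up to isomorphism exactly one extension of $K$ of degree $N$, namely the cyclic Kummer extension $L=K(\pi_R^{1/N})$, whose Galois group is $G\cong\Z/N$ and whose integral closure $R_L$ is a DVR, totally and tamely ramified of degree $N$ over $R$. (b) The log canonical model over $\Sp R$ (resp.\ over $\Sp R_L$) with prescribed generic fibre is unique when it exists.

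For $(2)\Rightarrow(1)$: starting from the log canonical model $(X,\D+(X_k)_{\re})$ over $\Sp R$ with $l\mid N$, I base change to $R_L$, let $X'$ be the normalization of $X\times_RR_L$, let $\p\colon X'\lr X$ be the induced finite surjection, and let $\D'$ be the strict transform of $\D$. Writing $X_k=\sum_im_iF_i$, a local computation at the (regular) generic point of each $F_i$ shows that $\p^{-1}(F_i)$ has $\gcd(m_i,N)$ components, each appearing in $X'_k$ with multiplicity $m_i/\gcd(m_i,N)$ and with ramification index $N/\gcd(m_i,N)$; as $l\mid N$ forces $m_i\mid N$ for every $i$, the fibre $X'_k$ is reduced. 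Proposition~\ref{kol} then gives the crepant identity $K_{X'}+X'_k+\D'=\p^*\big(K_X+(X_k)_{\re}+\D\big)$, so $(X',X'_k+\D')$ is log canonical and $K_{X'}+\D'\sim_{R_L}K_{X'}+X'_k+\D'$ is ample over $R_L$ (pullback of an ample divisor under a finite surjection). Combined with properness and flatness of $X'\lr\Sp R_L$ and with geometric normality and connectedness of $Y$ passing to $Y\times_K\overline K$, this exhibits $(X',\D')$ as a stable log model over $R_L$ with generic fibre $\simeq(Y,\D_Y)\times_KL$, so $(Y,\D_Y)$ has stable reduction after the degree-$N$ extension $L/K$.

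For $(1)\Rightarrow(2)$: given a stable log model $(\bar X,\bar\D)$ over $R_L$ after some degree-$N$ extension $L/K$, fact (a) makes $L/K$ the cyclic tame Kummer extension with group $G$. After reducing to the case that $\bar X$ is normal --- possible since the generic fibre $Y$ is geometrically normal --- the pair $(\bar X,\bar X_k+\bar\D)$ is the log canonical model over $R_L$ with generic fibre $Y_L:=Y\times_KL$, and fact (b) propagates the $G$-action on $Y_L$ over $Y$ to a $G$-action on $(\bar X,\bar\D)$ over $R$. Let $X:=\bar X/G$, with the descended boundary $\D$: then $X$ is normal and irreducible, $X\lr\Sp R$ is proper and flat with generic fibre $(Y,\D_Y)$ (as $Y_L/G=Y$, because $L^G=K$), and the quotient map $\p\colon\bar X\lr X$ is étale over the generic fibre and tamely ramified over $X_k$. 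Applying Proposition~\ref{kol} in the reverse direction gives $K_{\bar X}+\bar X_k+\bar\D=\p^*\big(K_X+(X_k)_{\re}+\D\big)$; since the left-hand side is log canonical and ample over $R_L$, descent along the finite surjection $\p$ shows $(X,\D+(X_k)_{\re})$ is a log canonical model over $\Sp R$ with generic fibre $\simeq(Y,\D_Y)$. Finally, writing $X_k=\sum_im_iF_i$ and comparing $\p^*X_k=\p^*\operatorname{div}_X(\pi_R)=\operatorname{div}_{\bar X}(\pi_R)=N\,\bar X_k$ with $\p^*\big(\sum_im_iF_i\big)=\sum_{i,j}m_ie_{ij}\bar F_{ij}$ (with $e_{ij}$ the ramification indices), matching components of the reduced fibre $\bar X_k$, forces $m_ie_{ij}=N$, hence $m_i\mid N$ for all $i$ and $l=\operatorname{lcm}_im_i\mid N$, which is $(2)$.

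For the ``in particular'': by fact (b) the log canonical model $(X,\D+(X_k)_{\re})$ over $\Sp R$ is unique, so $l$ is a well-defined invariant of $(Y,\D_Y)$; by $(2)\Rightarrow(1)$ the degree-$l$ Kummer extension achieves stable reduction whenever $\ch k\nmid l$, and by $(1)\Rightarrow(2)$ any tame extension achieving stable reduction has degree divisible by $l$, hence at least $l$. The main obstacle I foresee is in $(1)\Rightarrow(2)$: producing the $G$-equivariant structure on the stable log model over $R_L$ from uniqueness of the log canonical model, together with the preliminary reduction to a normal total space --- without these there is nothing to take a quotient of. By contrast, once Proposition~\ref{kol} (the crepant behaviour of the reduced total space under tame base change, usable in both directions) is granted, the remaining steps are routine verifications of flatness, properness, ampleness under finite morphisms, and the elementary local multiplicity computation.
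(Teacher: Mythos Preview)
Your proposal is correct and follows the same two-step architecture as the paper: for $(2)\Rightarrow(1)$, base change by $\pi=t^N$, normalize, and use tame Hurwitz plus Proposition~\ref{kol}; for $(1)\Rightarrow(2)$, extend the Galois action from the generic fibre to the stable log model, take the quotient, and verify it is the LCM with $m_i\mid N$ via the valuation identity $m_ie_{ij}=N$.

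The one substantive difference is how you justify the extension of the $G$-action in $(1)\Rightarrow(2)$. You invoke uniqueness of the log canonical model with prescribed generic fibre as a black box (your fact~(b)); the paper instead proves this extension directly, taking the normalized graph closure of $\mathrm{Id}\times\sigma$ in $X'\times X'$ and using the negativity lemma together with ampleness of $K_{X'}+\Delta'$ to force both projections to be isomorphisms (following \cite[11.40]{Kol23b}). Your shortcut is legitimate---the paper's argument \emph{is} the proof of the uniqueness statement you cite---but be aware that the version of uniqueness you need is slightly sharper than the textbook ``the LCM of a fixed pair is unique'': you need that an isomorphism of generic fibres of two LCMs over $R_L$ extends over the closed point.

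Two small corrections. First, you twice credit Proposition~\ref{kol} with producing the crepant identity $K_{X'}+X'_k+\Delta'=\p^*(K_X+(X_k)_{\re}+\Delta)$; in fact that proposition \emph{assumes} such an identity and concludes lc/klt preservation. The identity itself is the tame Riemann--Hurwitz formula, which the paper isolates as Lemma~\ref{bc}, and in the quotient direction one must first check via a norm computation that the pushed-down boundary is exactly $\frac{g_*\Delta'}{\deg g}+(X_k)_{\re}$ before Lemma~\ref{bc} applies. Second, ``descent along the finite surjection'' for ampleness is not quite immediate in this generality; the paper argues it via flat base change and Serre vanishing. Neither point is a gap, but both deserve a line in a full write-up.
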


A key ingredient in the proof of Theorem \ref{tame} is the following, where we refer to \cite[2.39]{Kol13} for the definition of \emph{ramified covers}: 

\begin{proposition}\cite[2.42-2.43]{Kol13}\label{kol} 
Let $g\colon X'\longrightarrow X$ be a ramified cover between demi-normal schemes. Let $\Delta$ and $\Delta'$ be $\Q$-divisors on $X$ and $X'$, respectively, such that there is a canonical $\Q$-linear equivalence
\[
K_{X'}+\Delta'\sim_{\Q}g^*(K_X+\Delta).
\]
Then, if $K_X+\Delta$ is lc (resp. klt), so is $K_{X'}+\Delta'$. The converse holds provided that $g$ is Galois and $\ch k\nmid\deg g$. 
\end{proposition}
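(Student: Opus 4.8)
The plan is to reduce to the case of normal pairs and then run a Riemann--Hurwitz comparison of log discrepancies. First I would dispose of the demi-normal case: the normalization $\wt{X}'\lr\wt{X}$ of $X'\lr X$ (or of a component) is again a ramified cover, and it carries the conductor of $X'$ to the conductor of $X$ in the way recorded in \cite[Ch. 5]{Kol13}, so that $K_X+\Delta$ is slc iff $K_{\wt X}+\wt\Delta$ is lc, and similarly on $X'$; this reduces everything to normal $X,X'$ (the klt case being normal to begin with). For such $g$, by definition of a ramified cover $g$ is finite, surjective, and generically \'etale, hence there is a well-defined effective ramification divisor $R$ on $X'$ with $K_{X'}\sim_{\Q}g^*K_X+R$, and the \emph{canonicity} of the given $\Q$-linear equivalence forces $\Delta'=g^*\Delta-R$.

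The heart of the matter is a comparison of log discrepancies. Let $v'$ be a divisorial valuation of $K(X')$ and $v$ its restriction to $K(X)$ (rescaled to be divisorial over $X$), with ramification index $e=e(v'/v)$. Pick a normal model $\mu\colon W\lr X$ on which $v$ is realized by a prime divisor $F$, let $\pi\colon W'\lr W$ be (a component of) the normalization of $W\times_XX'$, $\mu'\colon W'\lr X'$ the induced map, and $F'\subset W'$ the prime divisor realizing $v'$ (such an $F'$ exists on this model, as the valuations of $K(X')$ over $v_F$ are exactly those of height-one primes over $F$). Then $\mu'^*(K_{X'}+\Delta')=\pi^*\mu^*(K_X+\Delta)$ and $K_{W'}=\pi^*K_W+R_\pi$ with $R_\pi\ge0$ the ramification divisor of the generically \'etale map $\pi$, so that $K_{W'}-\mu'^*(K_{X'}+\Delta')=\pi^*\big(K_W-\mu^*(K_X+\Delta)\big)+R_\pi$. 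Taking $\coeff_{F'}$ and using that $F$ occurs in $K_W-\mu^*(K_X+\Delta)$ with coefficient $a(F,X,\Delta)$ while $\pi^*F$ has coefficient $e$ along $F'$, one obtains
\[
a(F',X',\Delta')+1 \;=\; e\big(a(F,X,\Delta)+1\big)\;+\;\big(\coeff_{F'}R_\pi-(e-1)\big).
\]
Since $\coeff_{F'}R_\pi\ge e-1$ always, with equality precisely when $\pi$ is tamely ramified along $F'$, this gives $a(F',X',\Delta')+1\ge e\,(a(F,X,\Delta)+1)$ in general, and equality in the tame case.

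The forward implication is then immediate: for any $v'$ over $X'$, if $K_X+\Delta$ is lc (resp. klt) then $a(F,X,\Delta)+1\ge0$ (resp. $>0$), hence $a(F',X',\Delta')+1\ge e\,(a(F,X,\Delta)+1)\ge0$ (resp. $>0$) because $e\ge1$; no restriction on $\ch k$ enters here. For the converse, assume $g$ is Galois with $\ch k\nmid\deg g$ and let $v$ be an arbitrary divisorial valuation over $X$; extend it to a valuation $v'$ of $K(X')$. Since the extension is Galois, the ramification index $e(v'/v)$ divides $\deg g$, so it is prime to $\ch k$, and the residue extension $\kappa(v')/\kappa(v)$, being normal of degree prime to $\ch k$ over an algebraically closed base, is separable; hence $v'/v$ is tamely ramified and the displayed identity holds with equality, $a(F',X',\Delta')+1=e\,(a(F,X,\Delta)+1)$. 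As $K_{X'}+\Delta'$ is lc (resp. klt), the left side is $\ge0$ (resp. $>0$), and dividing by $e>0$ shows $a(F,X,\Delta)+1\ge0$ (resp. $>0$); since $v$ was arbitrary, $K_X+\Delta$ is lc (resp. klt).

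The main obstacle is making the displayed log-discrepancy identity rigorous: constructing the common model $W'\lr W$, verifying $\pi$ is generically \'etale so that $R_\pi$ is defined, and—crucially—pinning down $\coeff_{F'}R_\pi=e-1$ in the tame case, which is a local Abhyankar/Hurwitz computation at the generic point of $F'$ (and the source of the inequality $\coeff_{F'}R_\pi\ge e-1$ in general). A secondary bookkeeping issue is the demi-normal reduction, where one must check that ramified covers are compatible with normalization and conductors; this is precisely where the relevant results of \cite[Ch. 5]{Kol13} are invoked.
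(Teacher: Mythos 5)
The paper does not prove this proposition---it is quoted directly from \cite[2.42--2.43]{Kol13}---so there is no in-paper argument to compare against; your write-up is essentially Kollár's own proof (reduction to the normal case, the Hurwitz formula on a common model, and the log-discrepancy comparison $a(F',X',\Delta')+1=e\,(a(F,X,\Delta)+1)+(\coeff_{F'}R_\pi-(e-1))$ with $\coeff_{F'}R_\pi\ge e-1$ and equality exactly in the tame case), and it is correct. The one justification I would tighten is the separability of the residue extension in the converse: the right reason is that its degree divides $\deg g$, which is prime to $\ch k$, and any finite field extension of degree prime to the characteristic is automatically separable---not that it is ``normal over an algebraically closed base.''
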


\begin{proof}[Proof of Theorem \ref{tame}]
$(2)\implies(1)$ Let $L$ be the field 
$$
\frac{K[t]}{\p-t^N},
$$
where $\p$ is a uniformizer of $R$. Let $R_L$ be the integral closure of $R$ in $L$, $X'$ be the normalization of $X\times_RR_L$, and $g\colon X'\longrightarrow X$ be the induced morphism. We will see that $(X',g^*\D)$ is a stable log model with generic fibre $\simeq(Y,\D_Y)\times_KL$. Since $N$ is not divisible by $\ch k$ and is a multiple of $l$, every component of $X'_k$ is reduced, ie. $(X'_k)_{\re}=X'_k$ (Lemma \ref{bcq}). Therefore, as $g\colon X'\longrightarrow X$ is tamely ramified, by Lemma \ref{bc} we obtain
\[
K_{X'}+g^*\D+X'_k\sim_{\Q}g^*(K_X+\D+(X_k)_{\re}).
\]
As $K_X+\D+(X_k)_{\re}$ is lc, so is $K_{X'}+g^*\D+X'_k$ by Proposition \ref{kol}. Since $K_X+\D+(X_k)_{\re}$ is ample over $\Sp R$, $K_{X'}+g^*\D+X'_k$ is ample over $\Sp R_L$. Thus $(X',g^*\D)$ is the stable log model.

\vspace{1mm}

$(1)\implies(2)$ Let $(X',\D')$ be a stable log model with generic fibre $(X'_L,\D'_L)\simeq(Y,\D_Y)\times_KL$. Since $R$ is Henselian with residue field $k=\ol k$ and $\ch k\nmid N$, $L/K$ is Galois with cyclic group $G=\langle\sigma\rangle$ of order $N$ (Lemma \ref{cyclic}). A generator $\sigma$ of $G$ induces an automorphism Id $\times\sigma$ of $Y\times_KL\simeq X'_L$, so $G$ acts on $X'_L$. 

We claim that $G$ acts on $X'$, i.e. the automorphism Id $\times\sigma$ on $X'_L$ extends to an automorphism on $X'$. For this we base heavily on the proof of \cite[11.40]{Kol23b}: Consider the normalization $\Gamma$ of the closure of the graph of Id $\times\sigma$ in $X'\times X'$, and let $p_1$ and $p_2$ be the projections from $\Gamma$ onto each $X'$. Suppose $E$ is a $p_1$-exceptional prime divisor. Denoting by ${p_{1_*}^{-1}}(\cdot)$ the strict transform, near the generic point of $E$ we have
$$
K_{\Gamma}+{p_{1_*}^{-1}}(\D'+X'_k)\sim_{\Q} p_1^*(K_{X'}+\D'+X'_k)+aE
$$
where $a\ge-1$, as $K_{X'}+\D'+X'_k$ is lc. Since $p_1$ is an isomorphism away from $X'_k$, the pullback $p_1^*X'_k$ (which makes sense as $X'_k$ is Cartier) contains at least one copy of $E$, so
\[
\begin{aligned}
    K_{\Gamma}+{p_{1_*}^{-1}}(\D'+X'_k)&\sim_{\Q} p_1^*(K_{X'}+\D')+p_1^*X'_k+aE\\
&\ge p_1^*(K_{X'}+\D')+{p_{1_*}^{-1}}X'_k+(1+a)E
\end{aligned}
\]
with $1+a\ge0$. Therefore, running through all $p_1$-exceptional prime divisors, we have 
\begin{equation}\label{E1}
    K_{\Gamma}+{p_{1_*}^{-1}}\D'\sim_{\Q} p_1^*(K_{X'}+\D')+E_1,
\end{equation}
where $E_1$ is an effective $p_1$-exceptional divisor. Likewise, we have
\begin{equation}\label{E2}
    K_{\Gamma}+{p_{2_*}^{-1}}\D'\sim_{\Q} p_2^*(K_{X'}+\D')+E_2
\end{equation}
where $E_2$ is effective $p_2$-exceptional. Since ${p_{1_*}^{-1}}\D'={p_{2_*}^{-1}}\D'$ (as $p_1^{-1}(\D'_L)$ and $p_2^{-1}(\D'_L)$ agree on $\Gamma$), subtracting (\ref{E2}) from (\ref{E1}) we obtain
\begin{equation}\label{3}
-(E_1-E_2)\sim_{\Q}p_1^*(K_{X'}+\D')-p_2^*(K_{X'}+\D'),    
\end{equation}
which is $p_2$-nef as $K_{X'}+\D'$ is ample. Thus by the negativity of contraction \cite[1.17]{Kol13}, $p_{2_*}(E_1-E_2)=p_{2_*}E_1\ge0$ implies $E_1-E_2\ge0$. By applying $p_{1_*}$ to (\ref{3}) we get $E_2-E_1\ge0$. Hence $E_1=E_2$ and
\[
p_1^*(K_{X'}+\D')\sim_{\Q}p_2^*(K_{X'}+\D').
\]
Suppose $p_1$ contracts a curve $C$. Then $C$ cannot be contracted by $p_2$, by the construction of $\Gamma$. But then
\[
0=p_1^*(K_{X'}+\D')\cdot C=p_2^*(K_{X'}+\D')\cdot C=(K_{X'}+\D')\cdot p_{2_*}C>0,
\]
a contradiction. Thus $p_1$ is finite, and hence an isomorphism (as $X'$ is normal). So the action of $G$ extends to the whole of $X'$, as claimed.

As $G$ acts on $X'$, we can consider the geometric quotient 
\[
g\colon X'\longrightarrow X'/G=:X.
\]
Then $X$ is defined over $(R_L)^G=R$ with generic fibre $(Y\times_KL)^G=Y$. We will show that $(X,\frac{g_*\D'}{\deg g}+(X_k)_{\re})$ is the log canonical model over $\Sp R$. Since $K_{X'}+\D'+X'_k$ is $\Q$-Cartier, so is its norm (c.f. \cite[2.40.1]{Kol13})
\begin{equation}\label{g_*}
g_*(K_{X'}+\D'+X'_k)=(\deg g)(K_X+\D+F),
\end{equation}
where $\Supp F\subseteq\Supp X_k$ and $\D$ is the rest so that none of the irreducible components of $\D$ is contained in $\Supp X_k$. Thus $g_*\D'=(\deg g)\D$, i.e. $\D=\frac{g_*\D'}{\deg g}$. Further, as $\Supp g_*X'_k=\Supp X_k$, $\Supp F=\Supp X_k$ by construction. We shall see that $F=(X_k)_{\re}$. Indeed, from (\ref{g_*}) we have $$g_*(K_{X'}+\D'+X'_k)=(\deg g)(K_X+\D+F)=g_*g^*(K_X+\D+F).$$ Thus $g_*((K_{X'}+\D'+X'_k)-g^*(K_X+\D+F))=0$, and, since $g$ is finite,
\begin{equation}\label{g^*}
K_{X'}+\D'+X'_k=g^*(K_X+\D+F).
\end{equation}
Applying Lemma \ref{bc} to (\ref{g^*}), as $\Supp F$ contains the branch divisor of $g$, $g^*\D=\D'$, and $(g^*F)_{\re}=X'_k$, $F$ is reduced, i.e. $F=(X_k)_{\re}$. Moreover, as $K_{X'}+\D'+X'_k$ is lc and $g$ is Galois with $\ch k\nmid\deg g$, $K_X+\D+F$ is lc by Proposition \ref{kol}. Now, restricting (\ref{g^*}) to the generic fibres gives $K_{X'_L}+\D'_L=g^*(K_{X_K}+\D_K)$, and hence we have 
\[
(X_K,\D_K)=(X'_L,\D'_L)^G\simeq((Y,\D_Y)\times_KL)^G=(Y,\D_Y).
\]
Thus $(X,\frac{g_*\D'}{\deg g}+(X_k)_{\re})$ is lc, with generic fibre $(X_K,\D_K)\simeq(Y,\D_Y)$.

Next, to show that $K_X+\frac{g_*\D'}{\deg g}+(X_k)_{\re}=:B$ is ample over $\Sp R$, it is enough to show that given any coherent sheaf $\mathcal{F}$ on $X$, $H^i(X,\mathcal{F}\otimes\oo_X(mB))=0$ for all $i>0$ and all sufficiently large $m$. We have the following diagram, with $g=g_L\circ\nu$:
\[
\begin{tikzcd}
X'\arrow[r,"\nu"]\arrow[rd]&X\times_RR_L\arrow[d,]\arrow[r,"g_L"] & X\arrow[d]\\
 &\Sp R_L\arrow[r] & \Sp R
\end{tikzcd}
\]
Since $\nu$ is finite surjective and $\nu^*g_L^*B=K_{X'}+\D'+X'_k$ is ample over $\Sp R_L$, $g_L^*B$ is ample over $\Sp R_L$. By flat base change \cite[III.9.3]{Har77}
\[
\begin{aligned}
H^i(X,\mathcal{F}\otimes\oo_X(mB))\otimes_RR_L
&\simeq H^i(X\times_RR_L,g_L^*(\mathcal{F}\otimes\oo_X(mB)))\\
&\simeq H^i(X\times_RR_L,g_L^*\mathcal{F}\otimes\oo_{X\times_RR_L}(mg_L^*B)).
\end{aligned}
\]
This vanishes for all $i>0$ and all sufficiently large $m$, since $g_L^*B$ is ample over $\Sp R_L$. As $R$ is local and $R\lr R_L$ is finite, $H^i(X,\mathcal{F}\otimes\oo_X(mB))\otimes_RR_L=0$ implies $H^i(X,\mathcal{F}\otimes\oo_X(mB))=0$ by Nakayama's lemma. Thus $K_X+\frac{g_*\D'}{\deg g}+(X_k)_{\re}$ is ample over $\Sp R$.

Finally, if $D$ is a component of $X_k$ with multiplicity $m_D$, and $D'$ is a component of $g^{-1}(D)$ (which is reduced), then by Lemma \ref{val} we have
\[
m_D\cdot e_{D'}=\deg g\cdot m_{D'}=\deg g.
\]
Thus $m_D$, and hence the least common multiple $l$, divides $\deg g=N$. Hence $(X,\frac{g_*\D'}{\deg g}+(X_k)_{\re})$ is the log canonical model. 
\end{proof}

\begin{corollary}\label{degp} Let $(Y,\D_Y)$ be as in Theorem \ref{tame}, and assume that it has stable reduction after a finite extension $L/K$. If there exists an LCM $(X,(X_k)_{\re}+\D)$ over $\Sp R$ with generic fibre $\simeq(Y,\D_Y)$ such that $X_k$ has a component whose multiplicity is divisible by $\ch k=p$, then the degree of $L/K$ is divisible by $p$.
\end{corollary}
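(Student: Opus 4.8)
The plan is to derive Corollary \ref{degp} from the uniqueness of the log canonical model together with Theorem \ref{tame}. First I would recall that the log canonical model $(X,(X_k)_{\re}+\D)$ over $\Sp R$ with fixed ample generic fibre $(Y,\D_Y)$ is unique: it is the $\mathrm{Proj}$ of the section ring $\bigoplus_{m\ge0}f_*\oo(m(K_X+\D))$, which depends only on the birational class of $(X,\D)$ over $R$ and the ampleness requirement, hence any two such models are isomorphic over $\Sp R$. In particular the multiset of multiplicities $\{m_D\}$ of the components of $X_k$, and their least common multiple $l$, are invariants of $(Y,\D_Y)$, independent of the chosen LCM. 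So the hypothesis that \emph{some} LCM has a component of multiplicity divisible by $p$ already forces $p \mid l$.

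Next I would argue by contradiction: suppose $[L:K]=N$ is not divisible by $p=\ch k$. I want to produce, from the stable reduction over $R_L$, an LCM over $\Sp R$, and then invoke Theorem \ref{tame}. The subtlety is that $N$ need not be prime to $p$ in general — only that $p \nmid N$ is what we are assuming for contradiction — so once $p\nmid N$ we are genuinely in the tame situation and Theorem \ref{tame} applies directly with that $N$: since $(Y,\D_Y)$ has stable reduction after the degree-$N$ extension $L/K$ and $\ch k \nmid N$, implication $(1)\Rightarrow(2)$ of Theorem \ref{tame} yields an LCM $(X,\D+(X_k)_{\re})$ over $\Sp R$ with generic fibre $\simeq (Y,\D_Y)$ whose $l = \mathrm{lcm}$ of the component multiplicities divides $N$. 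But $l$ is an invariant of $(Y,\D_Y)$ by the uniqueness above, and by hypothesis $p \mid l$; hence $p \mid N$, contradicting $p\nmid N$. Therefore $p \mid [L:K]$.

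One technical point to nail down is that Theorem \ref{tame} is stated for a \emph{given} integer $N$ not divisible by $\ch k$ and asserts stable reduction after an extension of degree exactly $N$; here $L/K$ has degree $N$ with $p\nmid N$ by the contradiction hypothesis, so the theorem applies verbatim with this $N$, and no genericity of $L$ among degree-$N$ extensions is needed because $(1)\Rightarrow(2)$ only uses the existence of the stable log model over $R_L$. The other point is the invariance of $l$: I would phrase this as the observation that if $(X_1,(X_{1,k})_{\re}+\D_1)$ and $(X_2,(X_{2,k})_{\re}+\D_2)$ are two LCMs over $\Sp R$ with the same generic fibre, the induced birational map is an isomorphism (again by negativity of contraction, exactly as in the argument showing $p_1$ is an isomorphism in the proof of Theorem \ref{tame}), so in particular their central fibres coincide as divisors and $l$ is well-defined. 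I expect this invariance statement to be the only real content beyond quoting Theorem \ref{tame}; everything else is a short logical deduction.
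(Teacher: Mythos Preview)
Your proposal is correct and follows essentially the same approach as the paper's proof: assume for contradiction that $p\nmid N=[L:K]$, apply $(1)\Rightarrow(2)$ of Theorem \ref{tame} to produce an LCM whose component multiplicities all divide $N$ (hence are prime to $p$), and then invoke uniqueness of the LCM to contradict the hypothesis. The paper's proof is just the terse three-line version of what you wrote; your additional remarks on why the LCM is unique (via negativity of contraction, as in the proof of Theorem \ref{tame}) are correct and make explicit what the paper leaves implicit.
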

\begin{proof}
Suppose to the contrary that the degree of $L/K$ is prime to $p$. Then Theorem \ref{tame} produces an LCM $(X,(X_k)_{\re}+\D)$ with generic fibre $\simeq(Y,\D_Y)$ such that every component of $X_k$ is prime to $p$. But the LCM is unique, a contradiction.
\end{proof}

\begin{remark} 
It seems interesting, though unclear to us how, to establish an analogue of Theorem \ref{tame} for cases where $K_Y+\D_Y$ is anti-ample or trivial. Once we fix a model $(X,(X_k)_{\re}+\D)$ over $\Sp R$ with generic fibre $\simeq(Y,\D_Y)$, the implication $(2)\Rightarrow(1)$ of Theorem \ref{tame} holds regardless of the positivity of $K_X+\D+(X_k)_{\re}$. However, without the ampleness of $K_Y+\D_Y$, our proof of $(1)\Rightarrow(2)$ fails in general, as the action of $G=\operatorname{Gal}(L/K)$ on the generic fibre may not extend to the total space.
\end{remark}

When there is a component with multiplicity divisible by $p$ in $X_k$, we do not know which base change would realize the stable reduction. One difficulty arises from the fact that $\p=t^p$ becomes an inseparable extension in characteristic $p$, causing wild ramification, which renders, for instance, Proposition \ref{kol} no longer useful. Another difficulty arises from the fact that there are many separable base changes of degree $p$ in characteristic $p$, and it is unclear to us which one would work. We discuss the following example, pointed out to us by Professor Michael McQuillan.

\begin{example}[\cite{Art75} and \cite{McQ}]\label{Artin} Let $C$ be a smooth curve of genus $\ge2$ over a field of characteristic two with an involution $\sigma$ having at least one fixed point such that $C/\sigma$ is rational. Consider the quotient $(C\times C)/\sigma$ by the diagonal action and the diagram
\[
\begin{tikzcd}
C\times C\arrow[r]\arrow[d] & (C\times C)/\sigma\arrow[d]\\
C\arrow[r] & C/\sigma
\end{tikzcd}
\]
Then $X':=C\times C$ is the stable model and the stable limit is a copy of $C$. The central fibre of $X:=(C\times C)/\sigma$ (with $\sigma$ satisfying extra properties if necessary) consists of a rational curve $D$ of even multiplicity $m_D$ with a single cusp, whose preimage under the base change $C\lr C/\sigma$ is the stable limit. Now, in the minimal log resolution $\h X$ of $(X,(X_k)_{\re})$, $D$ meets the rest of the central fibre at a single point and therefore gets contracted in the LCM $X^{\text{LCM}}$ of $(X,(X_k)_{\re})$ over $C/\sigma$. Thus the normalization of $X^{\text{LCM}}\times_{C/\sigma}C$ cannot be the stable model $X'$. On the other hand, $\h X$ introduces a component $E$ with multiplicity divisible by the residue characteristic $2$, which stays in $X^{\text{LCM}}$. Indeed, since $D$ meets the rest of $\h X_k$ in only one component, call it $D_1$, $m_D$ must divide $m_{D_1}$. Thus $m_{D_1}$ is even. If $D_1$ meets $\ge3$ components in $\h X_k$, then take $E=D_1$. If not, $D_1$ meets exactly one other component $D_2$ other than $D$. Then $m_{D_2}$ must be even. Continuing in this manner, we can take $E=D_i$ for some $i$. Otherwise, we would obtain a cycle of $\pr^1$s, which has arithmetic genus one and thus cannot occur. Finally, we note that the preimage of any such $E$ under the base change $C\lr C/\sigma$ is not the stable limit.
\end{example}

We now prove our main Theorem \ref{SRn1}:

\begin{proof}[Proof of Theorem \ref{SRn1}]
Start with the log canonical model $(X,\D+(X_k)_{\re})$ over $\Sp R$ with generic fibre $\simeq (X_K,\D_K)$. Write the central fibre $X_k=\sum_im_iF_i$, where $F_i$ is a component of $X_k$ with multiplicity $m_i$. Since $X\lr\Sp R$ is flat and $K_X+\D+(X_k)_{\re}$ is ample over $\Sp R$, we have
\begin{equation}\label{vge}
\begin{aligned}
   v&\ge\vol(K_{X_K}+\D_K)\\
    &=(K_X+\D+(X_k)_{\re}|_{X_K})^n\\
    &=(K_X+\D+(X_k)_{\re}|_{X_k})^n\\
    &=(K_X+\D+(X_k)_{\re})^n\cdot(\sum_im_iF_i)
    \\
    &=\sum_im_i(K_X+\D+(X_k)_{\re})^n\cdot F_i\\
\end{aligned}
\end{equation}
Here we note that, in contrast to the case of $\ch k=0$, where $(K_X+\D+(X_k)_{\re})|_{F_i}=K_{F_i}+\Diff(\D+(X_k)_{\re}-F_i)$ will be slc, in $\ch k>0$, these $F_i$ may even fail to satisfy $S_2$, as explained in \cite[Remark 2]{Kol23a}. Nevertheless, this is not an issue for our purposes, since we are concerned with the intersection number $(K_X+\D+(X_k)_{\re})^n\cdot F_i$, which can be computed by passing to the normalization $F^{\nu}_i$ of $F_i$, using the projection formula for intersection products. To study the term $(K_X+\D+(X_k)_{\re})^n\cdot F^{\nu}_i$, by \cite[4.2.9]{Kol13} we have
$$(K_X+\D+(X_k)_{\re})|_{F^{\nu}_i}\sim_{\Q}K_{F^{\nu}_i}+\Diff_{F^{\nu}_i}(\D+(X_k)_{\re}-F_i).$$ We see that $(F^{\nu}_i,\Diff_{F^{\nu}_i}(\D+(X_k)_{\re}-F_i))$ is a normal stable log variety. Indeed, since $K_X+\D+(X_k)_{\re}$ is lc and ample, so is $K_{F^{\nu}_i}+\Diff_{F^{\nu}_i}(\D+(X_k)_{\re}-F_i)$, by \cite[Lemma 4.8]{Kol13}. Moreover, $\coeff(\D)=\coeff(\D_K)\subseteq I$ implies that $\coeff(\D+(X_k)_{\re}-F_i)\subseteq I$, which in turn implies 
\[
\begin{aligned}
    \coeff(\Diff_{F_i^{\nu}}(\D+(X_k)_{\re}-F_i))
    &=\coeff(\Diff_{F_i}(\D+(X_k)_{\re}-F_i))\\
    &\subseteq D(I)
\end{aligned}
\]
by \cite[4.5 (6)]{Kol13} and \cite[16.7]{F&A}, with $D(I)$ defined in Lemma \ref{MP}. Therefore, $(F^{\nu}_i,\Diff_{F^{\nu}_i}(\D+(X_k)_{\re}-F_i))$ is a normal stable log variety with $\coeff(\Diff_{F^{\nu}_i}(\D+(X_k)_{\re}-F_i))\subseteq D(I)$, which is a DCC set since $I$ is (Lemma \ref{MP}). 

Now apply \hyperref[Vn]{Conjecture V\(_n\)} with $\mathscr{C}=D(I)$. We find a constant $v(n,D(I))>0$ such that $\vol(K_{F^{\nu}_i}+\Diff_{F^{\nu}_i}(\D+(X_k)_{\re}-F_i))\ge v(n,D(I))$ for every $i$. Therefore,
\[
\begin{aligned}
    (K_X+\D+(X_k)_{\re})^n\cdot F_i 
    &= (K_X+\D+(X_k)_{\re})^n\cdot F^{\nu}_i\\
    &= (K_X+\D+(X_k)_{\re}|_{F^{\nu}_i})^n\\
    &=(K_{F^{\nu}_i}+\Diff_{F^{\nu}_i}(\D+(X_k)_{\re}-F_i))^n\\
    &=\vol(K_{F^{\nu}_i}+\Diff_{F^{\nu}_i}(\D+(X_k)_{\re}-F_i))\\
    &\ge v(n,D(I))>0.
\end{aligned}
\]
This combined with (\ref{vge}) gives
$$\frac{v}{v(n,D(I))}\ge\sum_im_i\ge m_i.$$ Therefore, if
\[
p>\frac{v}{v(n,D(I))}
\]
then every component of $X_k$ has multiplicity $m_i$ less than $p$. In particular, every $m_i$ is prime to $p$. Thus the proof of [Theorem \ref{tame}, (2)$\implies$(1)] applies: let $L=\frac{K[t]}{\p-t^N}$, where $\p$ is a uniformizer of $R$ and $N$ is the least common multiple of all the multiplicities $m_i$. This induces a finite morphism $g$ from the normalization $X'$ of $X\times_RR_L$ to $X$ (where $R_L=\frac{R[t]}{\p-t^N}$ is a DVR). Then $(X',g^*\D)$ is the stable log model over $\Sp R_L$ with generic fibre $\simeq (X_K,\D_K)\times_KL$.
\end{proof}

\begin{remark}[Uniformity]\label{uniform}One can always take $N=(p-1)!$ in the proof of \ref{SRn1}.
\end{remark}

\begin{proof}[Proof of Corollary \ref{SR2lc}]
\hyperref[Vn]{Conjecture V\(_2\)} holds uniformly for any algebraically closed field $k$ by \cite[Theorem 2]{HK16}. On the other hand, by spreading out the morphism $(X_K,\D_K)\lr\Sp K$, we obtain a flat projective morphism $(X,\D)\lr\Sp R$ with generic fibre $(X_K,\D_K)$. By passing to a log resolution $\h X$ (\cite[Theorem 1.1]{CP19} and \cite[Corollary 1.5]{CJS20}), we may assume $(\h X,\h\D+(\h X_k)_{\re})$ is lc, where $\h\D$ denotes the strict transform of $\D$ plus the reduced exceptional divisors not contained in $\h X_k$. In this way, $(K_{\h X}+\h\D+(\h X_k)_{\re})|_{\h X_K}=K_{\h X_K}+\h\D_K$ has volume $=\vol(K_{X_K}+\D_K)>0$, so $K_{\h X}+\h\D+(\h X_k)_{\re}$ is big over $\Sp R$. Thus by \cite[Theorem 4.11]{HNT20}, the log canonical model $(X_1,\D_1+(X_{1,k})_{\re})$ over $\Sp R$ exists over any perfect field $k$ with $\ch k>5$. Its generic fibre $(X_{1,K},\D_{1,K})$ is the LCM of $(X_K,\D_K)$, and hence is isomorphic to $(X_K,\D_K)$, as $K_{X_K}+\D_K$ is lc and ample and the LCM is unique (\cite[3.52 (1)]{KM98}). Therefore, the conclusion of Theorem \ref{SRn1} holds when $n=2$ and $\ch k>\operatorname{max}\{5,\frac{v}{v(2,D(I))}\}$.
\end{proof}

Via Posva's gluing theorem \cite{Pos24} for threefolds in characteristic $p$, we obtain

\begin{corollary}[Stable reduction for slc canonically polarized surfaces]\label{SR2} 
Fix $v>0$ and a DCC set $I\subset(0,1]$ that contains $1$. If everything is defined over an algebraically closed field $k$ of characteristic $>\operatorname{max}\{5,\frac{v}{v(2,D(I))}\}$, with $\vol(K_{X_K}+\D_K)\le v$ and $\coeff(\D_K)\subseteq I$, then \hyperref[SRn]{Conjecture SR\(_2\)} holds more generally when $X_K$ is geometrically demi-normal, with $K_{X_K}+\D_K$ slc and ample over $K$.
\end{corollary}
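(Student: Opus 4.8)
The plan is to reduce to the normal case established in Corollary \ref{SR2lc} by normalizing $X_K$, running stable reduction on each irreducible component, and then re-gluing the resulting stable log models via Posva's gluing theory \cite{Pos24}, following the method outlined in \cite[Section 7]{HX13} together with Kollár's gluing theory \cite[Chapter 5]{Kol13}. Since everything is defined over the field $k$, all total spaces that occur are three-dimensional (surfaces over a DVR containing $k$), so Posva's results for threefolds in characteristic $p$ apply.

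First I normalize. Let $\nu_K\colon\overline{X}_K\to X_K$ be the normalization, $\overline{D}_K$ its conductor divisor, and $\overline{\D}_K=\nu_K^{-1}\D_K$ the divisorial preimage, so that $\nu_K^*(K_{X_K}+\D_K)=K_{\overline{X}_K}+\overline{D}_K+\overline{\D}_K$ by the definition of slc; this divisor is log canonical and ample over $K$ (the pullback of an ample divisor under a finite morphism), and, since $\nu_K$ is finite and birational, $\vol(K_{\overline{X}_K}+\overline{D}_K+\overline{\D}_K)=\vol(K_{X_K}+\D_K)\le v$. The slc structure of $X_K$ is encoded by an involution $\ta_K$ of the normalization $\overline{D}_K^{\nu}$ of the conductor, compatible with the different, such that $(X_K,\D_K)$ is recovered as the quotient of $(\overline{X}_K,\overline{D}_K+\overline{\D}_K)$ by this datum. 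After a preliminary finite extension of $K$ (localizing the integral closure at a prime over $\mathfrak{m}_R$, which keeps the residue field $k$) I may assume every connected component $X_{K,j}$ of $\overline{X}_K$ is geometrically connected, hence geometrically normal; then the volumes of the $X_{K,j}$ sum to at most $v$, so each is at most $v$, and the coefficients of $(\overline{D}_K+\overline{\D}_K)|_{X_{K,j}}$ lie in $I$ (the conductor contributes the coefficient $1\in I$, and $\overline{\D}_K$ has the same coefficients as $\D_K$). Applying Corollary \ref{SR2lc} to each $(X_{K,j},(\overline{D}_K+\overline{\D}_K)|_{X_{K,j}})$ and passing to a common finite extension $L/K$ — using that a stable log model with geometrically normal generic fibre stays stable after a further finite base change — I obtain a stable log model $(\overline{X},\overline{D}+\overline{\D})$ over $\Sp R_L$, the disjoint union of the stable log models of the $X_{K,j}\times_KL$, with generic fibre $(\overline{X}_K,\overline{D}_K+\overline{\D}_K)\times_KL$, where $\overline{D}$ is the conductor of $\overline{X}\to\Sp R_L$ and $\overline{\D}$ the closure of $\overline{\D}_K\times_KL$.

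Next I extend the gluing involution and glue. By adjunction, the normalized conductor $(\overline{D}^{\nu},\Diff)$ of $\overline{X}$ is a stable log model over $\Sp R_L$, now of relative dimension one, so $K_{\overline{D}^{\nu}}+\Diff$ is ample over $\Sp R_L$. The base change of $\ta_K$ to $L$ is an automorphism of the generic fibre of $\overline{D}^{\nu}$, and the graph-closure argument from the proof of $(1)\Rightarrow(2)$ of Theorem \ref{tame} — normalizing the closure of the graph in $\overline{D}^{\nu}\times_{R_L}\overline{D}^{\nu}$ and invoking the negativity lemma together with the ampleness of $K_{\overline{D}^{\nu}}+\Diff$ over the base — shows that it extends to an automorphism $\ta$ of $\overline{D}^{\nu}$ over $\Sp R_L$; uniqueness of the extension forces $\ta^2=\mathrm{id}$ and compatibility with $\Diff$. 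Posva's gluing theorem \cite{Pos24}, applied to the stable family $\overline{X}\to\Sp R_L$ with the involution $\ta$, then produces a demi-normal stable family $X'\to\Sp R_L$ whose normalization recovers $(\overline{X},\overline{D},\ta)$; writing $\D'$ for the descent of $\overline{\D}$, the pair $(X',\D')$ is a stable log model over $\Sp R_L$. Since gluing commutes with restriction to the generic fibre, the generic fibre of $(X',\D')$ is the quotient of $(\overline{X}_K,\overline{D}_K+\overline{\D}_K)\times_KL$ by the datum $(\overline{D}_K^{\nu},\Diff,\ta_K)$, which is $(X_K,\D_K)\times_KL$; thus $(X_K,\D_K)$ has stable reduction after $L/K$, proving \hyperref[SRn]{Conjecture SR\(_2\)} in the demi-normal case.

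I expect the main obstacle to be the correct application of Posva's gluing theorem in the relative setting over the DVR $R_L$: one must check that the extended involution $\ta$ satisfies all of its hypotheses — notably, that $\ta$ is compatible with the different along the special fibre and that the quotient $X'$ is $S_2$ with at worst nodal codimension-one singularities and connected fibres, so that $X'\to\Sp R_L$ is a genuine family of demi-normal varieties with the prescribed generic fibre. The extension of $\ta_K$, although formally parallel to Theorem \ref{tame}, also requires some care because $\overline{D}^{\nu}$ may be reducible with its components permuted by $\ta$.
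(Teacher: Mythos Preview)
Your proposal is correct and follows essentially the same route as the paper: normalize, apply Corollary~\ref{SR2lc} componentwise, then glue back via Posva's theory \cite{Pos24}. The only notable differences are in presentation: (i) the paper secures a single common extension $L/K$ by invoking the uniformity Remark~\ref{uniform} (one may take $N=(p-1)!$ throughout), whereas you pass to a common extension and appeal to preservation of stable log models under further base change; and (ii) the paper simply defers the entire gluing step to Step~2 of Posva's proof of \cite[Theorem~6.0.5]{Pos24}, while you spell out the extension of $\ta_K$ explicitly via the graph-closure/negativity argument from Theorem~\ref{tame} applied to the one-dimensional stable log model $(\overline{D}^{\nu},\Diff)$---which is indeed the mechanism underlying Posva's argument. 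Your identification of the verification burden (that $\ta$ respects the different on the special fibre and that the quotient is demi-normal with connected fibres) is accurate; the paper handles this by citing the specific results \cite[9.55, 9.11]{Kol13}, \cite[9.4]{Pat17}, \cite[2.15, 2.16.3]{Kol23b}, and \cite[6.0.4, 3.4.1, 5.3.5]{Pos24}.
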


\begin{proof}
Let $\coprod_i(X^i_K,D_K^i)\overset{\nu}{\lr}X_K$ be the normalization, with the induced involution $\ta_K$, where each $X^i_K$ is a connected component of the normalization of $X_K$ and $D_K^i$ is the conductor on $X_K^i$. Denote by $\D^i_K$ the divisorial part of $\nu^{-1}(\D_K)$ on $X^i_K$. Then, as $K_{X_K}+\D_K$ is slc and ample, $\nu^*(K_{X_K}+\D_K)=\sum_i(K_{X^i_K}+D_K^i+\D^i_K)$ is lc and ample. Moreover, for each $i$, $\vol(K_{X^i_K}+D_K^i+\D^i_K)\le\vol(K_{X_K}+\D_K)\le v$ and $\coeff(D_K^i+\D^i_K)=\coeff(\D^i_K)\subseteq\coeff(\D_K)\subseteq I$. Thus, by applying Corollary \ref{SR2lc} to each $(X^i_K,D_K^i+\D^i_K)$ (along with Remark \ref{uniform}), we find that if $\ch k>\operatorname{max}\{5,\frac{v}{v(2,D(I))}\}$, there exists a finite extension $L/K$ and pairs $(X'_i,D'_i+\D'_i)$ over $T':=\Sp R_L$ for each $i$, with generic fibre $\simeq(X_K^i,D_K^i+\D^i_K)\times_KL$, such that $(X'_i)_k$ is reduced, and $K_{X'_i}+D'_i+\D'_i+(X'_i)_k$ is lc and ample over $T'$. 

The rest follows from Step 2 of Posva's proof of \cite[Theorem 6.0.5]{Pos24}, where it is shown that we can "glue $(X'_i,D'_i+\D'_i)$ back" to obtain the stable log model. For ease of comparison with Posva's notation, we explain only the boundary-free case. The argument extends directly to the boundary case, as the Propositions and Lemmas used (specifically, \cite[9.55, 9.11]{Kol13}, \cite[9.4]{Pat17}, \cite[2.15, 2.16.3]{Kol23b}, and \cite[6.0.4, 3.4.1, 5.3.5]{Pos24}) are stated for, and hold in, the boundary case.

Thus we set $I=\{1\}$ and $\D_K=0$ above (and consequently, $\D^i_K=0$ and $\D'_i=0$). In particular, the bound on $\ch k$ for which Corollary \ref{SR2lc} holds becomes $\operatorname{max}\{5,\frac{v}{v(2,D(\{1\}))}\}$. We have now established, using Posva's notation therein, that 
$$K_{\Bar{X}_{\text{can}}}+\Bar{D}_{\text{can}}+(\Bar{X}_{\text{can}})_{t'}\,\,\text{is lc and ample, and}\,\,(\Bar{X}_{\text{can}})_L\simeq\coprod_i(X_K^i,D_K^i)\times_KL,$$ where $t'\in T'$ denotes the closed point, and
$$\Bar{X}_{\text{can}}=\coprod_iX'_i,\quad\Bar{D}_{\text{can}}=\coprod_iD'_i,\quad\text{and}\quad(\Bar{X}_{\text{can}})_{t'}=\coprod_i(X'_i)_k$$
in our notation. It is then shown that the pullback of the involution $\ta_K$ extends to an involution $\ta_{\text{can}}$ on the normalization of $\Bar{D}_{\text{can}}$ satisfying desired properties, and thus there exists a geometric quotient $\nu\colon\Bar{X}_{\text{can}}=\coprod_iX'_i\longrightarrow X_{\text{can}}$ over $T'$. As geometric quotient commutes with flat base change \cite[9.11]{Kol13}, $X_{\text{can}}\longrightarrow T'$ has generic fibre isomorphic to the quotient of $\coprod_i(X_K^i,D_K^i)\times_KL$, namely $X_K\times_KL$. Moreover, $X_{\text{can}}$ is demi-normal and $K_{X_{\text{can}}}$ is $\Q$-Cartier by \cite[Propositions 3.4.1 and 5.3.5]{Pos24}, respectively. Finally, since $\nu^*(K_{X_{\text{can}}}+(X_{\text{can}})_{t'})=K_{\Bar{X}_{\text{can}}}+\Bar{D}_{\text{can}}+(\Bar{X}_{\text{can}})_{t'}$ is lc and ample, $K_{X_{\text{can}}}+(X_{\text{can}})_{t'}$ is slc and ample. Thus $(X_{\text{can}},0)$ is the stable (log) model over $T'$. With the presence of $\D_K$, the above argument produces a stable log model $(X_{\text{can}},\D_{\text{can}})$ over $T'$ with generic fibre $\simeq(X_K,\D_K)\times_KL$.
\end{proof}

\begin{proof}[Proof of Theorem \ref{proper}]
Set $I=\{1\}$ and $\D_K=0$ in Corollary \ref{SR2}. We will show that the valuative criterion of properness holds for families of stable surfaces of volume $v\le v_0$ when $\ch k>\operatorname{max}\{5,\frac{v_0}{v(2,D(\{1\}))}\}$. Let $(X_K\lr\Sp K)\in\overline{\mathscr{M}}_{2,v,k}(\Sp K)$, where $K$ is the fraction field of a DVR $R$ with algebraically closed residue field $k$ of characteristic $>\operatorname{max}\{5,\frac{v_0}{v(2,D(\{1\}))}\}$. By Corollary \ref{SR2}, there exists a finite field extension $L/K$ and a stable (log) model $X_{\text{can}}$ over $\Sp R_L$ with generic fibre $\simeq X_K\times_KL$. It then remains, by the proof of \cite[Theorem 6.0.5]{Pos24}, to show that condition $(S2)$ stated therein holds, namely by proving that the central fibre $(X_{\text{can}})_k$ is $S_2$. This was established in \cite[Theorem 4.12]{ABP23} for $\ch k>5$. Therefore, we conclude that $(X_{\text{can}}\lr \Sp R_L)\in\overline{\mathscr{M}}_{2,v,k}(\Sp R_L)$. This completes the valuative criterion of properness, and thus $\overline{\mathscr{M}}_{2,v,k}$ is proper when $v\le v_0$ and $\ch k>\operatorname{max}\{5,\frac{v_0}{v(2,D(\{1\}))}\}$. The projectivity of $\overline{\text{M}}_{2,v,k}$ then follows as in the proof of \cite[Theorem 4.13]{ABP23}, namely from \cite[Theorem 1.2 (2)]{Pat17}. 
\end{proof}

When $k=\C$, we show that the log canonical model of $(X,\D+(X_k)_{\re})$ over $\Sp R$ exists under mild assumptions:

\begin{corollary}\label{lcm0}
Suppose everything here is defined over $k=\C$. Let $f\colon X\longrightarrow \Sp R$ be a projective morphism from a quasi-projective normal variety $X$ onto the spectrum of a DVR $R$ with residue field $k$. Assume that $(X,\D+(X_k)_{\re})$ is lc, and that $(X,\D)$ is klt away from the central fibre $X_k$. If $K_X+\D$ is $f$-big, then the log canonical model of $(X,\D+(X_k)_{\re})$ over $\Sp R$ exists.
\end{corollary}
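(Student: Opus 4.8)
The plan is to run the circle of ideas from the introduction in this non‑proper setting: make the central fibre reduced by a tame base change $t\mapsto t^N$, over the larger base reduce the existence of the log canonical model to the main theorem of \cite{BCHM10}, and then descend by a Galois quotient, exactly as in the two halves of the proof of Theorem \ref{tame}. Concretely, carry out the base change as in [Theorem \ref{tame}, $(2)\implies(1)$]: writing $X_k=\sum_im_iF_i$ and $N=\operatorname{lcm}(m_i)$, which is prime to $\ch k=0$, set $L=K[t]/(\p-t^N)$ with $\p$ a uniformizer of $R$, let $R_L$ be the integral closure of $R$ in $L$, $X'$ the normalization of $X\times_RR_L$, and $g\colon X'\lr X$ the induced finite morphism. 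By Lemmas \ref{bcq}, \ref{bc} and Proposition \ref{kol}, $X'_k$ is reduced, $(X',g^*\D+X'_k)$ is lc, and $(X',g^*\D)$ is klt away from $X'_k$ (as $g$ is étale over the generic fibre, where $(X_K,\D_K)$ is klt — which is exactly what the hypothesis ``$(X,\D)$ klt away from $X_k$'' provides). As $X'_k$ is a principal Cartier divisor on $X'$, one has $K_{X'}+g^*\D+X'_k\sim K_{X'}+g^*\D$, and this class is big over $\Sp R_L$ since $K_X+\D$ is $f$-big and bigness survives the finite base change of generic fibres. So over $\Sp R_L$ the task becomes to produce the log canonical model of the lc pair $(X',g^*\D+X'_k)$, whose log canonical class is big over the base.

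Next, invoke \cite{BCHM10}. Take a $\Q$-factorial dlt modification $\mu\colon(Y,\Gamma)\lr(X',g^*\D+X'_k)$ and write $\Gamma=\Delta_Y+S$ with $S=\lfloor\Gamma\rfloor$; then $(Y,\Delta_Y)$ is klt, $K_Y+\Gamma$ is big over $\Sp R_L$, and every component of $S$ lies over $X'_k$, because all lc centres of $(X',g^*\D+X'_k)$ do (this uses once more that $(X_K,\D_K)$ is klt, which confines the non‑klt locus to the central fibre). Running the $(K_Y+\Gamma)$‑MMP over $\Sp R_L$ with scaling of a $\mu$-ample divisor, the required flips exist and the program terminates by \cite{BCHM10}, together with the standard reduction of the $\Q$-factorial dlt minimal model program with big log canonical class to the klt case; the resulting nef divisor is semiample over $\Sp R_L$, and its contraction $W'$ is the log canonical model of $(X',g^*\D+X'_k)$ over $\Sp R_L$, with $(W',\Gamma_{W'})$ lc and $K_{W'}+\Gamma_{W'}$ ample over $\Sp R_L$. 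Since $W'$ is $\operatorname{Proj}_{\Sp R_L}$ of the log canonical ring of $(X',g^*\D+X'_k)$ — a ring of sections computed on $X'$ — it is dominated by $X'$ and no divisor is extracted, so $\Gamma_{W'}=\Delta_{W'}+W'_k$, where $\Delta_{W'}$ and $W'_k$ are the birational transforms of $g^*\D$ and of the reduced divisor $X'_k$; in particular $W'_k$ is reduced.

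Finally, descend along $L/K$. The extension is cyclic with group $G$ of order $N$ (Lemma \ref{cyclic}); $G$ acts on $(X',g^*\D+X'_k)$ over $\Sp R_L$ compatibly with its action on $\Sp R_L$, hence on $W'$ by uniqueness of the log canonical model. Set $W:=W'/G\lr\Sp R$, with quotient map $h\colon W'\lr W$. Exactly as in [Theorem \ref{tame}, $(1)\implies(2)$], Proposition \ref{kol} (Galois quotient, $\ch k\nmid N$) gives that $\bigl(W,\tfrac1Nh_*\Delta_{W'}+(W_k)_{\re}\bigr)$ is lc, and flat base change together with Nakayama's lemma gives that $K_W+\tfrac1Nh_*\Delta_{W'}+(W_k)_{\re}$ is ample over $\Sp R$. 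Since $W$ is normal, birational to $X$ over $\Sp R$, and $\tfrac1Nh_*\Delta_{W'}+(W_k)_{\re}$ is the birational transform of $\D+(X_k)_{\re}$, this pair is the log canonical model of $(X,\D+(X_k)_{\re})$ over $\Sp R$.

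The main obstacle is the middle step: it requires the full $\Q$-factorial dlt (equivalently, log canonical of log general type) relative minimal model program over the base, which goes slightly beyond the bare statement of \cite{BCHM10} and relies on the standard package built upon it. The base change and the klt hypothesis are precisely what streamline this reduction — the former trivializes the central‑fibre part of the boundary over $\Sp R_L$, and the latter forces every lc centre to lie over the central fibre, so that the only coefficient‑one locus introduced is the reduced central fibre (plus exceptional lc places lying over it, which are contracted in the end). A secondary point requiring care is checking that the pair produced by descent really is the log canonical model of $(X,\D+(X_k)_{\re})$ over $\Sp R$; this follows from uniqueness of log canonical models together with the compatibility of the log canonical ring with the finite flat base change $\Sp R_L\to\Sp R$ and with passage to $G$-invariants.
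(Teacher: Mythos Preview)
Your proposal is correct and follows the same three-step architecture as the paper: tame base change to make the central fibre reduced, produce the log canonical model upstairs, then descend by the Galois quotient as in the two halves of Theorem \ref{tame}. The difference lies entirely in the middle step.

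You only establish that $(X',g^*\D)$ is klt away from $X'_k$, which pushes you through a dlt modification and the lc/dlt MMP, invoking the ``standard package'' beyond the bare statement of \cite{BCHM10}. The paper instead shows that $(X',g^*\D)$ is klt \emph{everywhere}, so that \cite[Theorem 1.2(3)]{BCHM10} applies directly to the klt pair $(X',g^*\D)$ with no modification needed. The trick is to observe that $K_X+\D+(1-\tfrac{1}{N})(X_k)_{\re}$ is klt: for divisors $E$ with centre in $\Supp X_k$ one has $a(E,X,\D+(1-\tfrac{1}{N})(X_k)_{\re})>a(E,X,\D+(X_k)_{\re})\ge -1$, while for $E$ with centre off $X_k$ the discrepancy equals $a(E,X,\D)>-1$ by the klt-away-from-$X_k$ hypothesis. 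Pulling back by $g$ via the Hurwitz computation gives $g^*(K_X+\D+(1-\tfrac{1}{N})(X_k)_{\re})\sim_{\Q}K_{X'}+g^*\D+\sum_{i,j}(1-\tfrac{e_{ij}}{N})F'_{ij}$, which is klt by Proposition \ref{kol}; since the extra term is effective, $(X',g^*\D)$ is klt. Then, because $X'_k\sim 0$ over $\Sp R_L$, the canonical rings of $(X',g^*\D)$ and $(X',g^*\D+X'_k)$ coincide, so finite generation for the former gives the log canonical model of the latter.

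Your route can be made to land in \cite{BCHM10} as well --- since $S=\lfloor\Gamma\rfloor$ is supported in $\mu^*X'_k$, which is principal over $\Sp R_L$, the pair $(Y,\Gamma-\epsilon\,\mu^*X'_k)$ is klt with the same relative numerical class as $(Y,\Gamma)$ --- but this is exactly the paper's $(1-\tfrac{1}{N})$ trick transported to $Y$. The paper's version is cleaner because it performs the perturbation on $X$ before the base change and avoids the dlt modification altogether.
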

\begin{proof}
Let $N$ be the least common multiple of all the multiplicities of the components of $X_k$. Let $R'=\frac{R[t]}{\p-t^N}$, where $\p$ is a uniformizer of $R$, and make the base change $T'=\Sp R'\lr T=\Sp R$. Then the normalization $X'$ of $X\times_TT'$ has central fibre $X'_k$ reduced (Lemma \ref{bcq}). Let $g\colon X'\lr X$ and $f'\colon X'\lr T'$ denote the induced morphisms, and set $\D':=g^*\D$. We first show that $K_{X'}+\D'$ is $\Q$-Cartier, $f'$-big, and klt. By Lemma \ref{bc} we have
\[
g^*(K_X+\D+(X_k)_{\re})\sim_{\Q}K_{X'}+\D'+X'_k.
\]
As $K_X+\D+(X_k)_{\re}$ and $X'_k$ are $\Q$-Cartier, so is $K_{X'}+\D'$. Since pullback commutes with restriction, we have $$g^*(K_{X_K}+\D_K)=g^*((K_X+(X_k)_{\re}+\D)|_{X_K})\sim_{\Q}(K_{X'}+X'_k+\D')|_{X'_L}=K_{X'_L}+\D'_L$$
where $K,L$ denote the fraction field of $R,R'$ respectively. As $K_{X_K}+\D_K$ is big, so is $K_{X'_L}+\D'_L$, i.e. $K_{X'}+\D'$ is $f'$-big. 

To show $K_{X'}+\D'$ is klt, let us show first that $K_X+\D+(1-\frac{1}{N})(X_k)_{\re}$ is klt. Let $E$ be a divisor over $X$. By \cite[2.27]{KM98} and the fact that $(X_k)_{\re}$ is $\Q$-Cartier (as $(X,\D+(X_k)_{\re})$ and $(X,\D)$ are pairs), if $\operatorname{center}_XE\subset\Supp X_k$, then the discrepancy satisfies $a(E,X,\D+(1-\frac{1}{N})(X_k)_{\re})>a(E,X,\D+(X_k)_{\re})$; if $\operatorname{center}_XE\not\subset\Supp X_k$, then $a(E,X,\D+(1-\frac{1}{N})(X_k)_{\re})=a(E,X,\D)$. Therefore, as $(X,\D+(X_k)_{\re})$ is lc and $(X,\D)$ is klt away from $X_k$, $K_X+\D+(1-\frac{1}{N})(X_k)_{\re}$ is klt. Now write $X_k=\sum_im_iF_i$, where $F_i$ denotes a component of $X_k$ with multiplicity $m_i$, and let $F'_{ij}$ be the preimages of $F_i$ under $g$, with $e_{ij}$ their ramification indices. Then we have $g^*F_i=\sum_je_{ij}F'_{ij}$ and $K_{X'}\sim_{\Q}g^*K_X+\sum_{i,j}(e_{ij}-1)F'_{ij}$. Therefore, as in \cite[2.41.5]{Kol13}, we obtain
\[
\begin{aligned}
&\quad\,\, g^*(K_X+\D+(1-\frac{1}{N})(X_k)_{\re})\\
&\sim_{\Q} K_{X'}-\sum_{i,j}(e_{ij}-1)F'_{ij}+\D'+(1-\frac{1}{N})\sum_{i,j}e_{ij}F'_{ij}\\
&= K_{X'}+\D'+\sum_{i,j}(1-\frac{e_{ij}}{N})F'_{ij}.
\end{aligned}
\]
Since $K_X+\D+(1-\frac{1}{N})(X_k)_{\re}$ is klt, so is its pullback $K_{X'}+\D'+\sum_{i,j}(1-\frac{e_{ij}}{N})F'_{ij}$ by Proposition \ref{kol}. Since $\sum_{i,j}(1-\frac{e_{ij}}{N})F'_{ij}$ is effective (as $e_{ij}\le N$), we conclude that $K_{X'}+\D'$ is klt.

Now, as $K_{X'}+\D'$ is klt, $\Q$-Cartier, and $f'$-big, by \cite[Theorem 1.2 (3)]{BCHM10} the $\oo_{T'}$-algebra
$$
\mathfrak{R}(f',\D')=\bigoplus_{m\ge0} f'_*\oo_{X'}(\lfloor m(K_{X'}+\D') \rfloor)
$$
is finitely generated, whose Proj over $T'$ is the log canonical model of $(X',\D')$ over $T'$. But this algebra is the same as the algebra
$$
\mathfrak{R}(f',\D'+X'_k)=\bigoplus_{m\ge0} f'_*\oo_{X'}(\lfloor m(K_{X'}+\D'+X'_k) \rfloor),
$$
as $X'_k$ is linearly $f'$-trivial. Thus the log canonical model of $(X',\D'+X'_k)$ over $T'$ is the same as that of $(X',\D')$, which exists and will be denoted by $(X'_1,\D'_1+X'_{1,k})$. The rest then follows as in the proof of [Theorem \ref{tame}, (1)$\implies$(2)]: the Galois group $G$ associated to the base change $g$ acts on $X'_1$, thus we can take the geometric quotient $g_1\colon X'_1\lr X'_1/G=:X_1$ and obtain
$$
g_1^*(K_{X_1}+\D_1+(X_{1,k})_{\re})=K_{X'_1}+\D'_1+X'_{1,k}
$$
where $\D_1=\frac{g_{1*}\D'_1}{\deg g_1}$. As $K_{X'_1}+\D'_1+X'_{1,k}$ is lc and ample over $T'$, so is $K_{X_1}+\D_1+(X_{1,k})_{\re}$ over $T$. Thus $(X_1,\D_1+(X_{1,k})_{\re})$ is the log canonical model over $T$.
\end{proof}

\begin{corollary}\label{SRn0}
\hyperref[SRn]{Conjecture SR\(_n\)} holds over $k=\C$, provided that $(X_K,\D_K)$ is klt. 
\end{corollary}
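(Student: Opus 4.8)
The plan is to produce the log canonical model over $\Sp R$ via Corollary \ref{lcm0}, and then to finish with the tame base change used in the proof of Theorem \ref{SRn1}, which over $k=\C$ carries no restriction on the characteristic. Concretely, I would first spread out $(X_K,\D_K)\lr\Sp K$ to a flat projective morphism $f_0\colon(X_0,\D_0)\lr\Sp R$ with $X_0$ normal and generic fibre $(X_K,\D_K)$ (for instance, the normalization of the closure of $X_K$ in some $\pr^N_R$). Since $\ch k=0$, I would then pass to a log resolution $g\colon\h X\lr X_0$ of the pair $(X_0,\D_0+((X_0)_k)_{\re})$. Let $E_1,\dots,E_a$ be the $g$-exceptional prime divisors dominating $\Sp R$; each restricts to an exceptional prime divisor $(E_j)_K$ of $g|_{\h X_K}\colon\h X_K\lr X_K$, and I set $a_j:=a((E_j)_K,X_K,\D_K)$, which is $>-1$ because $(X_K,\D_K)$ is klt, and $b_j:=\operatorname{max}\{0,-a_j\}\in[0,1)$. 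Finally I define $\h\D:=g_*^{-1}\D_0+\sum_jb_jE_j$, so that no vertical exceptional divisor enters $\h\D$ and the horizontal ones enter with coefficient $<1$.

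Next I would verify the hypotheses of Corollary \ref{lcm0} for $(\h X,\h\D+(\h X_k)_{\re})$. Near $\h X_k$ the scheme $\h X$ is smooth and $\Supp\h\D\cup(\h X_k)_{\re}$ is snc with all coefficients $\le1$, so $(\h X,\h\D+(\h X_k)_{\re})$ is dlt, in particular lc. Restricting to the generic fibre, $g|_{\h X_K}$ is a log resolution of the klt pair $(X_K,\D_K)$ and $\h\D_K=(g|_{\h X_K})_*^{-1}\D_K+\sum_jb_j(E_j)_K$ is snc with all coefficients $<1$; hence $(\h X_K,\h\D_K)$ is klt, that is, $(\h X,\h\D)$ is klt away from $\h X_k$. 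And the discrepancy formula gives
\[
K_{\h X_K}+\h\D_K\sim_{\Q}(g|_{\h X_K})^*(K_{X_K}+\D_K)+\sum_j(a_j+b_j)(E_j)_K
\]
with every $a_j+b_j=\operatorname{max}\{a_j,0\}\ge0$, so $K_{\h X}+\h\D$ is big over $\Sp R$. By Corollary \ref{lcm0} the log canonical model $(X,\D+(X_k)_{\re})$ of $(\h X,\h\D+(\h X_k)_{\re})$ over $\Sp R$ exists; the displayed relation also shows its generic fibre is the log canonical model of $(\h X_K,\h\D_K)$, which is $(X_K,\D_K)$ itself, since $K_{X_K}+\D_K$ is lc and ample and lc models are unique (\cite[3.52 (1)]{KM98}).

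The conclusion is now the last paragraph of the proof of Theorem \ref{SRn1}, applied verbatim. Writing $X_k=\sum_im_iF_i$, put $N:=\operatorname{lcm}_i(m_i)$ and $L:=K[t]/(\p-t^N)$, let $R_L$ be the integral closure of $R$ in $L$, and let $g'\colon X'\lr X$ be the morphism from the normalization $X'$ of $X\times_RR_L$. Then $X'_k$ is reduced (Lemma \ref{bcq}), and by Lemma \ref{bc} and Proposition \ref{kol} — neither of which needs any hypothesis on $\ch k$ here, since $\ch k=0$ — we get $K_{X'}+(g')^*\D+X'_k\sim_{\Q}(g')^*(K_X+\D+(X_k)_{\re})$, which is lc and ample over $\Sp R_L$. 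Hence $(X',(g')^*\D)$ is a stable log model over $\Sp R_L$ with generic fibre $\simeq(X_K,\D_K)\times_KL$, which is precisely \hyperref[SRn]{Conjecture SR\(_n\)}.

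The one point demanding care — bookkeeping rather than a genuine obstacle — is the choice of the boundary $\h\D$ on $\h X$: it must be light enough on the horizontal exceptional divisors to keep the generic fibre klt (so that Corollary \ref{lcm0} applies), yet $\Supp\h\D\cup(\h X_k)_{\re}$ must remain lc and $K_{\h X}+\h\D$ must remain big over $\Sp R$. The choice $b_j=\operatorname{max}\{0,-a_j\}$ satisfies all three requirements simultaneously, precisely because it forces $a_j+b_j\ge0$; everything else reduces to invoking Corollary \ref{lcm0} and the base-change argument already carried out in the proof of Theorem \ref{SRn1}.
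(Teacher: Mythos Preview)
Your proposal is correct and follows essentially the same approach as the paper: spread out, take a log resolution, define the boundary $\h\D$ by putting coefficient $\max\{0,-a_j\}$ on the horizontal exceptional divisors (this is exactly the paper's choice, just phrased via the generic fibre), apply Corollary \ref{lcm0}, and finish with the tame base change. The paper cites the proof of [Theorem \ref{tame}, (2)$\Rightarrow$(1)] for the last step while you cite the last paragraph of Theorem \ref{SRn1}, but these are the same argument.
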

\begin{proof}
By spreading out $(X_K,\D_K)\lr\Sp K$, we obtain a flat projective morphism $(X,\D)\lr\Sp R$ with generic fibre $(X_K,\D_K)$. By Hironaka, there exists a log resolution $\h X$ of $(X,\D+(X_k)_{\re})$ such that the sum of the strict transform of $\D+(X_k)_{\re}$ and $\sum_iE_i$ has simple normal crossing support, where $E_i$ denote the exceptional divisors. Set $\h\D:=(\text{strict transform of\,\,}\D)-\sum_i a(E_i,X,\D)E_i$, where the sum runs over those $E_i\not\subset\Supp\h X_k$ and with $a(E_i,X,\D)<0$. In this way, $(K_{\h X}+\h\D)|_{\h X_K}=K_{\h X_K}+\h\D_K$ has volume $=\vol(K_{X_K}+\D_K)>0$, so $K_{\h X}+\h\D$ is big over $\Sp R$. Furthermore, as $(X_K,\D_K)$ is klt, $\coeff\h\D\subset(0,1)$ by the construction of $\h\D$, and thus $(\h X,\h\D)$ is klt by \cite[2.31 (3)]{KM98}. Therefore by Corollary \ref{lcm0} the log canonical model $(X_1,\D_1+(X_{1,k})_{\re})$ over $\Sp R$ exists. Its generic fibre $(X_{1,K},\D_{1,K})$ is the LCM of $(X_K,\D_K)$, and hence is isomorphic to $(X_K,\D_K)$, as $K_{X_K}+\D_K$ is lc and ample and the LCM is unique. Hence we are done by the proof of [Theorem \ref{tame}, (2)$\implies$(1)].
\end{proof}

As mentioned in the introduction, when $\ch k=0$ or sufficiently large, we can obtain the stable model of a family $X\lr T$ of varieties of general type by first passing to its LCM $(X,(X_k)_{\re})$ over $T$, and then making a base change. In particular, the stable limit is the preimage of the central fibre $X_k$ of the LCM under the base change, where we describe its components in the following 

\begin{corollary}[Stable limits]\label{SL} Suppose everything here is defined over an algebraically closed field $k$. Let $R$ be a DVR with fraction field $K$ and residue field $k$. Let $X_K$ be a projective, geometrically normal, and geometrically connected variety over $K$, and let $\D_K$ be an effective divisor on $X_K$ such that $K_{X_K}+\D_K$ is log canonical and ample. Assume that there exists an LCM $(X,\D+(X_k)_{\re})$ over $\Sp R$ with generic fibre $\simeq(X_K,\D_K)$, and that either 

$(1)$ $\ch k=0$, or 

$(2)$ $\ch k=p$, every component of the central fibre $X_k$ of the LCM has multiplicity prime to $p$, and a log resolution $\h X$ of $(X,\D+(X_k)_{\re})$ exists. 

Then, the stable log model $(X',\D')$ exists after a tamely ramified base change $g$, and the stable limit $X'_k$ is $g^{-1}(X_k)$, with its components described as follows. Let $F$ be a component of $X_k$ with multiplicity $m_F$. Then $g^{-1}(F)$ consists of $\#_F:=\operatorname{gcd}\{m_F\,,\,n_j\}_j$ reduced components $F'_1,\cdots, F'_{\#_F}$ in the stable limit, each of which is a degree $\frac{m_F}{\#_F}$ cover of $F$, where $n_j$ are the multiplicities of the components in the log resolution $\h X$ that meet the strict transform of $F$. Moreover, each $F'_i$ has $$\vol(K_{F'_i}+B'_i)=\frac{m_F}{\#_F}\cdot\vol(K_{F}+B),$$ where $B'_i=\Diff_{F'_i}(\D'+X'_k-F'_i)$ and $B=\Diff_F(\D+(X_k)_{\re}-F)$. 
\end{corollary}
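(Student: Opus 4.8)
The plan is to combine the ``reverse'' construction already used in the proof of Theorem~\ref{tame}, $(2)\Rightarrow(1)$ --- namely the base change $L = K[t]/(\p - t^N)$ with $N = \mathrm{lcm}$ of the multiplicities --- with a careful local analysis of the preimage $g^{-1}(F)$ using ramification indices and the structure of the log resolution $\h X$. First I would dispose of the existence of the stable log model: in case~(1) this is classical (or follows from Corollary~\ref{lcm0} applied to $\h X$, or from \cite{HX13}), and in case~(2) every component of $X_k$ has multiplicity prime to $p$, so the proof of Theorem~\ref{tame}, $(2)\Rightarrow(1)$, applies verbatim to produce $(X',\D')$ over $\Sp R_L$ with $X'_k$ reduced, $(X',\D'+X'_k)$ lc, and $K_{X'}+\D'$ ample over $\Sp R_L$, via the tamely ramified cover $g\colon X' \lr X$. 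This already gives that the stable limit is $X'_k = g^{-1}(X_k)$ as a set, since $X'$ is the normalization of $X\times_R R_L$.

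Next I would count the components of $g^{-1}(F)$. Work étale-locally (or complete) at the generic point of $F$; there $X$ looks like $\Sp$ of a DVR with uniformizer cutting out $F$, and $X_k$ is cut out by $\p$, with $\mathrm{ord}_F(\p) = m_F$. The key point is that $X'$ is the normalization, so I need the normalization of $R_F[t]/(\p - t^N)$ where $R_F = \oo_{X,\eta_F}$; since $\p = u\cdot \pi_F^{m_F}$ for a unit $u$ and $N$ is divisible by $m_F$, this normalization splits according to $\gcd(m_F, N)$ --- but $N$ was chosen as the lcm of \emph{all} multiplicities, so $\gcd(m_F,N) = m_F$ would give $m_F$ components, \emph{not} what the statement claims. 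The resolution of this apparent discrepancy is exactly the role of the $n_j$: the normalization $X'$ of $X\times_R R_L$ differs from the normalization of $\h X \times_R R_L$ precisely along the exceptional locus, and one must instead compute $g^{-1}(F)$ by first passing to $\h X$, base-changing there (where $F$ meets exceptional components of multiplicities $n_j$), and then contracting back down. Tracking which branches get identified after contraction shows that the number of distinct components over $F$ is $\#_F = \gcd\{m_F, n_j\}_j$, and the remaining ramification index of $g$ along each $F'_i \to F$ is $m_F/\#_F$. I would make this precise using Lemma~\ref{val} (the $e_{D'}\cdot m_D = \deg g \cdot m_{D'}$ relation) together with the fact that on the resolution all multiplicities along a connected chain meeting $F$ are forced to be compatible, as in the ``cycle of $\pr^1$s'' argument of Example~\ref{Artin}.

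Finally, the volume formula. By adjunction (\cite[4.2.9]{Kol13} and \cite[Lemma 4.8]{Kol13}), $(K_{X'}+\D'+X'_k)|_{(F'_i)^\nu} \sim_\Q K_{(F'_i)^\nu} + \Diff(\D'+X'_k - F'_i) =: K_{(F'_i)^\nu} + B'_i$, and similarly on $F$; since $g\colon X' \to X$ is tamely ramified with $(g^*F)_{\mathrm{red}} = X'_k$ near $F$, Lemma~\ref{bc} gives that the restriction of $g$ to $(F'_i)^\nu \to F^\nu$ is a finite morphism pulling $K_F + B$ back to $K_{(F'_i)^\nu} + B'_i$ up to $\Q$-linear equivalence. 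Taking $n$-th self-intersections and using the projection formula for the degree-$\frac{m_F}{\#_F}$ map yields $\vol(K_{F'_i}+B'_i) = \frac{m_F}{\#_F}\vol(K_F+B)$; note the degree here is $m_F/\#_F$ rather than $m_F$ precisely because $g^{-1}(F)$ breaks into $\#_F$ pieces, and this is consistent with equation~(\ref{v}) in the introduction since $\sum_i \vol(K_{F'_i}+B'_i) = \#_F \cdot \frac{m_F}{\#_F}\vol(K_F+B) = m_F\vol(K_F+B)$. The main obstacle I anticipate is the combinatorial bookkeeping in the previous paragraph: correctly identifying which branches of $g^{-1}(F)$ coalesce after contracting the exceptional divisors of $\h X$, and verifying that the count is exactly $\gcd\{m_F, n_j\}_j$ rather than something depending on the geometry of the chain --- this requires knowing that the only constraint imposed by $(X,(X_k)_{\re})$ being lc is the divisibility relations among multiplicities along components meeting $F$, which is where the log canonical hypothesis (forbidding, e.g., higher-genus configurations of exceptionals) does the work.
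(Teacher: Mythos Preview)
Your overall architecture is right and matches the paper: existence of $(X',\D')$ via Theorem~\ref{tame} $(2)\Rightarrow(1)$, the degree formula $\deg(g|_{F'_i})=m_F/\#_F$ via Lemmas~\ref{val} and~\ref{ef}, and the volume identity via adjunction and the projection formula are all as in the paper's proof. The real divergence is in how you propose to compute $\#_F$, and there the proposal contains a genuine misconception.

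Your picture is that on $\h X'$ (the normalization of $\h X\times_RR_L$) the preimage of $\h F$ breaks into many branches, and some of these ``coalesce'' under the contraction $\h X'\to X'$. This is wrong: $\h X'\to X'$ is proper birational, so strict transform gives a bijection between components of $\hat g^{-1}(\h F)$ and components of $g^{-1}(F)$. Nothing gets identified. The paper exploits exactly this: by Lemma~\ref{ef}, $\deg g=\#_F\cdot e\cdot f$ with $e,f$ computed at the generic point of $F'_i$ over the generic point of $F$, so $\#_F$ is independent of the model on which you compute it, and one may pass freely to $\h X$. Relatedly, your generic-point calculation on $X$ is not right either: the unit $u$ in $\p=u\pi_F^{m_F}$ lives in a DVR with residue field $k(F)$, which is \emph{not} algebraically closed, so $u$ cannot be absorbed and the factorization of $t^N-u\pi_F^{m_F}$ need not have $m_F$ factors. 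The whole point of going to $\h X$ and working at \emph{closed} points $\h Q\in\h F$ is that there the residue field is $k=\bar k$, so the unit does have the requisite roots and one can carry out the explicit normalization: for each prime $q\mid m_F$, the base change $\p=t^q$ yields, after normalizing, an equation $v^q=x_\alpha^{n'_\alpha}\cdots x_s^{n'_s}$, which factors into $q$ pieces precisely when the right side is $1$, i.e.\ when $q\mid n_j$ for every $n_j$ appearing at $\h Q$. Running over all $\h Q\in\h F$ and all such primes gives $\#_F=\gcd\{m_F,n_j\}_j$. Finally, the log canonical hypothesis plays no role in this count (it is used only for the existence of the stable model), and the ``cycle of $\pr^1$s'' argument from Example~\ref{Artin} is unrelated; drop both.
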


\begin{proof}
We treat only the $\ch k=p$ case, which will cover the $\ch k=0$ case. Start with the LCM $(X,\D+(X_k)_{\re})$ over $\Sp R$. By Theorem \ref{tame}, we obtain the stable log model $(X',\D')$ after a tame base change $g\colon \Sp R'\lr \Sp R$, where $X'$ is the normalization of $X\times_RR'$ and $\D'=g^*\D$. Denoting again by $g\colon X'\lr X$ the induced morphism, we have $$K_{X'}+\D'+X'_k\sim_{\Q}g^*(K_X+\D+(X_k)_{\re}).$$ Since pullback commutes with restriction, for each component $F$ of $X_k$ and any preimage $F'_i\subseteq g^{-1}(F)$, we have $$(K_{X'}+\D'+X'_k)|_{F'_i}\sim_{\Q}(g|_{F'_i})^*((K_X+\D+(X_k)_{\re})|_F).$$ Thus by adjunction we obtain
\begin{equation}\label{g|*}
K_{F'_i}+B'_i\sim_{\Q}(g|_{F'_i})^*(K_F+B),
\end{equation}
where $B'_i=\Diff_{F'_i}(\D'+X'_k-F'_i)$ and $B=\Diff_F(\D+(X_k)_{\text{red}}-F)$. Since $R$ contains $k=\ol k$, $g$ is Galois. Thus by Lemma \ref{ef}, we have $\deg g=\#_F\cdot e\cdot\deg(g|_{F'_i})$; while by Lemma \ref{val}, $\deg g=m_F\cdot e$. Hence $\deg (g|_{F'_i})=\frac{m_F}{\#_F}$. Thus (\ref{g|*}) implies
\[
\vol(K_{F'_i}+B'_i)=\frac{m_F}{\#_F}\cdot\vol(K_F+B).
\]

It remains to show that $g^{-1}(F)$ consists of $\#_F=\text{gcd}\{m_F\,,\,n_j\}_j$ components in the stable limit $X'_k$. Note first that $\#_F$ is the same as the number of components of $g^{-1}(\h F)$, where $\h F$ is the strict transform of $F$ on any log resolution $\h X$. Indeed, by Lemma \ref{ef}, $\deg g=\#_F\cdot e\cdot f$, where $\deg g,e,$ and $f$, and hence $\#_F$, are all independent of the model to which the lemma is applied. Therefore, we pass to a log resolution $\widehat{X}$ of $(X,\D+(X_k)_{\re})$ and compute the number of components of $g^{-1}(\h F)$. Locally formally near a point $\h Q\in \h F$ in $\h X$, namely in the completion $\h\oo_{\h X,\h Q}$, we have
\begin{equation}\label{tq}
    \p=(\text{unit})x_0^{m_F}x_1^{n_1}\cdots x_s^{n_s}
\end{equation}
where $\p$ is a uniformizer of the DVR $R$, $x_0$ defines $\h F$ with multiplicity $m_F$, and each of the remaining $x_j$ defines a component with multiplicity $n_j$ that contains $\h Q$. By assumption $m_F$ is prime to $p$, so the unit has an $m_F$-th root ($\h\oo_{\h X,\h Q}$ is Henselian with residue field $k=\ol k$) and hence can be absorbed by $x_0^{m_F}$. Let $q$ be a prime that divides $m_F$ and make a base change $\p=t^q$. Then equation (\ref{tq}) becomes
\begin{equation}\label{x0xs}
    t^q=x_0^{m_F}x_1^{n_1}\cdots x_s^{n_s}.
\end{equation}
Suppose $q$ divides $n_1,\cdots, n_{\alpha-1}$. Then the normalizaton of (\ref{x0xs}) introduces 
\[
v=\frac{t}{x_0^{\frac{m_F}{q}}x_1^{\frac{n_1}{q}}\cdots x_{\alpha-1}^{\frac{n_{\alpha-1}}{q}}x_{\alpha}^{\lfloor\frac{n_{\alpha}}{q}\rfloor}\cdots x_s^{\lfloor\frac{n_s}{q}\rfloor}}
\]
and the equation becomes
\begin{equation}\label{v^q}
v^q=x_{\alpha}^{n'_{\alpha}}\cdots x_s^{n'_s}
\end{equation}
where $1\le n'_{j}\le q-1$. Since $v^q-x_{\alpha}^{n'_{\alpha}}\cdots x_s^{n'_s}=\prod_{i=1}^q(v-\xi^i_q x_{\alpha}^{\frac{n'_{\alpha}}{q}}\cdots x_s^{\frac{n'_s}{q}})$, where $\xi_q$ is a primitive $q$-th root of unity, and none of the factors $(v-\xi^i_q x_{\alpha}^{\frac{n'_{\alpha}}{q}}\cdots x_s^{\frac{n'_s}{q}})$ lives in the ring to which $v^q-x_{\alpha}^{n'_{\alpha}}\cdots x_s^{n'_s}$ belong, we see that equation (\ref{v^q}) factors into $q$ factors if and only if $x_{\alpha}^{n'_{\alpha}}\cdots x_s^{n'_s}=1$, meaning that $q$ divides all $n_1,\cdots ,n_s$. Therefore, under the base change $\p=t^q$, $F$ splits into $q$ components if and only if every point $\h Q\in\h F$ has $q$ preimages, which is equivalent to the equation (\ref{v^q}) factoring into $q$ factors for every $\h Q\in \h F$. This happens precisely when every component that meets $\h F$ in the log resolution has multiplicity $n_j$ divisible by $q$. Repeating this procedure for each prime $q$ that divides $\text{gcd}\{m_F\,,\,n_j\}_j$, we find that $g^{-1}(F)$ consists of $\text{gcd}\{m_F\,,\,n_j\}_j$ components in the stable limit.
\end{proof}

\begin{remark}
Corollary \ref{SL} holds unconditionally when $k=\C$: the stable log model $(X',\D')$ over $\Sp R'$ exists by \cite[1.5]{HX13}. Taking the quotient of $X'$ by $G$ as in the last paragraph of Corollary \ref{lcm0} produces the LCM $(X,\D+(X_k)_{\re})$ over $\Sp R$ with generic fibre $\simeq(X_K,\D_K)$.
\end{remark}

\appendix

\section{Lemmas}

This section collects some lemmas used in the proofs, which are provided here for convenience. Throughout, let $X\longrightarrow\Sp R$ be a morphism from a Noetherian normal scheme $X$ onto the spectrum of a DVR $R$ with algebraically closed residue field $k$. A base change $g\colon\Sp R'\longrightarrow\Sp R$ induces a morphism (still denoted by $g$) from the normalization $X'$ of $X\times_RR'$ to $X$. 

\begin{lemma}\label{val}
Let $g\colon X'\longrightarrow X$ be the morphism induced from a totally ramified base change (such as $\p=t^N$). Let $D$ be a component of the central fibre of $X$, and $D'$ a component of $g^{-1}(D)$, with $m_D,m_{D'}$ their multiplicities respectively. Then
\[
m_D\cdot e_{D'}=\deg g\cdot m_{D'},
\]
where $e_{D'}$ denotes the ramification index of $D'$ with respect to $g$.
\end{lemma}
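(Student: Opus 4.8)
The plan is to reduce the identity to a single computation of order functions along the tower of discrete valuation rings $R\subseteq\oo_{X,D}$ and $R'\subseteq\oo_{X',D'}$. First I would fix notation: let $\p$ be a uniformizer of $R$, so that the central fibre of $X$ is cut out by $\p$ and hence $m_D=\operatorname{ord}_D(\p)$, the coefficient of $D$ in $\operatorname{div}_X(\p)$; similarly, fixing a uniformizer $\p'$ of $R'$, we have $m_{D'}=\operatorname{ord}_{D'}(\p')$. Since $D$ lies over the closed point of $\Sp R$ and $D'$ over that of $\Sp R'$, both of these orders are positive, and since $g$ is dominant and $X$, $X'$ are normal, $\oo_{X,D}\subseteq\oo_{X',D'}$ is an extension of DVRs, so that $e_{D'}$ is well defined.

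Next I would unwind the hypothesis that the base change is totally ramified. Because $k=\ol k$, the residue extension of $R\subseteq R'$ is trivial, so "totally ramified of degree $\deg g$" means exactly that $\operatorname{ord}_{R'}(\p)=\deg g$; equivalently $\p=u\,(\p')^{\deg g}$ for some unit $u\in R'$. (For the model case $\p=t^N$ one may take $\p'=t$, $u=1$, and $\deg g=N$.) Then I would compute $\operatorname{ord}_{D'}(\p)$ in two ways, using the inclusions $R'\subseteq\oo_{X',D'}$ and $\oo_{X,D}\subseteq\oo_{X',D'}$. On one hand, $u$ is a unit of $R'$, hence of $\oo_{X',D'}$, so $\operatorname{ord}_{D'}(\p)=\deg g\cdot\operatorname{ord}_{D'}(\p')=\deg g\cdot m_{D'}$. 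On the other hand, writing $\p=w\,t_D^{m_D}$ in $\oo_{X,D}$ with $t_D$ a uniformizer and $w\in\oo_{X,D}^{\times}$, the element $w$ remains a unit in $\oo_{X',D'}$, so $\operatorname{ord}_{D'}(\p)=m_D\cdot\operatorname{ord}_{D'}(t_D)=m_D\cdot e_{D'}$, the last equality being the definition of the ramification index $e_{D'}$ of the extension $\oo_{X,D}\subseteq\oo_{X',D'}$. Equating the two computations of $\operatorname{ord}_{D'}(\p)$ yields $m_D\cdot e_{D'}=\deg g\cdot m_{D'}$, as claimed.

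I do not expect a genuine obstacle here: the content is simply multiplicativity of order functions in the tower $R\subseteq\oo_{X,D}\subseteq\oo_{X',D'}$ compared with $R\subseteq R'\subseteq\oo_{X',D'}$. The only minor points to keep straight are that $D$ and $D'$ really lie in the central fibres (so that $\p$ and $\p'$ lie in the respective maximal ideals and the valuations are positive), and that $X'$ being the normalization of $X\times_RR'$ makes $\oo_{X',D'}$ a discrete valuation ring dominating $\oo_{X,D}$; both are immediate in the setting of the lemma.
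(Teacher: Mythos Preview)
Your proof is correct and follows essentially the same approach as the paper: compute $\operatorname{ord}_{D'}(\p)$ in two ways, once via the tower $R\subseteq\oo_{X,D}\subseteq\oo_{X',D'}$ and once via $R\subseteq R'\subseteq\oo_{X',D'}$, using that total ramification gives $\operatorname{ord}_{R'}(\p)=\deg g$. The paper's version is terser but the underlying argument is identical.
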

\begin{proof}
The valuation of $D'$ can be computed by pulling back a uniformizer $\p$ of the base $\Sp R$ in two ways, which agree: first pullback to $X$ giving $m_D$, then pullback by $g$ giving $m_D\cdot e_{D'}$; or first pullback by $g\colon\Sp R'\longrightarrow\Sp R$ giving $e_{0'}=\deg g$ (as $g$ is totally ramified), then pullback to $X'$ giving $\deg g\cdot m_{D'}$. 
\end{proof}

\begin{lemma}\label{ef} Let $g\colon X'\longrightarrow X$ be the morphism induced from a totally ramified, Galois base change. Let $D$ be a component of the central fibre of $X$. Then 
\[
\deg g=\#\cdot e\cdot f
\]
where $\#$ is the number of components $D'_i$ of $g^{-1}(D)$, $e$ is the ramification index of $D'_i$, and $f$ is the degree $\deg(g|_{D'_i}\colon D'_i\longrightarrow D)$.
\end{lemma}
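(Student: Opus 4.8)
The plan is to localize the whole situation at the generic point $\eta_D$ of $D$ and read off the statement from the classical ``$\sum e_if_i=n$'' identity for extensions of Dedekind domains, refined by the group action. Since $X$ is normal, $A:=\oo_{X,\eta_D}$ is a DVR with fraction field $F:=K(X)$ and residue field $\kappa:=k(D)$, and a uniformizer of $A$ is a local equation for $D$ at $\eta_D$. As $g\colon X'\lr X$ is finite (it factors through the finite morphisms $X'\lr X\times_RR'\lr X$) and $X$ is excellent, hence Nagata, the integral closure $A'$ of $A$ in $F':=K(X')$ is a finite $A$-module; thus $A'$ is a semilocal Dedekind domain. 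Because $X'$ is normal we have $\oo_{X',\eta_{D'_i}}=A'_{\mathfrak m_i}$ for the maximal ideals $\mathfrak m_1,\dots,\mathfrak m_\#$ of $A'$, and these are exactly the codimension-one points of $X'$ lying over $\eta_D$, i.e. the generic points of the components $D'_1,\dots,D'_\#$ of $g^{-1}(D)$ (every such component dominates $D$, as $g$ is finite). Under this dictionary the ramification index $e_i$ of $D'_i$ with respect to $g$ is the ramification index of the extension of DVRs $A\subseteq A'_{\mathfrak m_i}$, while $f_i:=[A'/\mathfrak m_i:\kappa]=[k(D'_i):k(D)]=\deg(g|_{D'_i}\colon D'_i\lr D)$. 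Since $A$ is a DVR, $A'$ is finite free over $A$ of rank $[F':F]=\deg g$, and reducing modulo a uniformizer of $A$ yields $\sum_{i=1}^{\#}e_if_i=\deg g$.

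It remains to see that the Galois hypothesis forces all the $e_i$ to coincide and all the $f_i$ to coincide. The base change being Galois, its group $G$ (of order $\deg g$) acts on $X'$ over $X$ with geometric quotient $X'/G=X$, as in the proof of Theorem \ref{tame}; since the fibres of a geometric quotient by a finite group are precisely the $G$-orbits, $G$ acts transitively on $g^{-1}(\eta_D)=\{\eta_{D'_1},\dots,\eta_{D'_\#}\}$, hence transitively on the set of components $\{D'_1,\dots,D'_\#\}$. Any $\sigma\in G$ with $\sigma(D'_i)=D'_j$ induces an $A$-isomorphism $A'_{\mathfrak m_i}\simeq A'_{\mathfrak m_j}$ and a $D$-isomorphism $D'_i\simeq D'_j$, so $e_i=e_j$ and $f_i=f_j$ for all $i,j$. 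Writing $e$ and $f$ for the common values, the identity above becomes $\deg g=\#\cdot e\cdot f$.

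The one genuinely delicate point is that the fundamental identity must hold with equality rather than merely as the inequality $\sum e_if_i\le[F':F]$ that can occur for a non-Japanese DVR; this is exactly where excellence of the base ring—hence finiteness of $A'$ over $A$—is used. The rest, namely matching the components of $g^{-1}(D)$ with the maximal ideals of $A'$ and the transitivity bookkeeping for $G$, is routine. I also note that the ``totally ramified'' hypothesis serves only to guarantee that $R'$ is again a DVR and that $X'$ is irreducible, so that $\deg g=[K(X'):K(X)]$ is unambiguous; the argument itself needs only that $g$ is finite with $X'$ normal and that $X'/G=X$.
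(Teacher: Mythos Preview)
Your proof is correct and follows essentially the same approach as the paper's: both derive the fundamental identity $\deg g=\sum_i e_{D'_i}f_{D'_i}$ (the paper simply cites \cite[Proposition 7.1.38]{Liu02}, whereas you unpack it by localizing at $\eta_D$ and using that $A'$ is finite free over the DVR $A$) and then use the Galois action to force all $e_i$ and all $f_i$ to coincide. Your version is more self-contained and makes explicit where excellence enters, but the underlying argument is the same.
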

\begin{proof}
We may assume $X$ is connected. By \cite[Proposition 7.1.38]{Liu02} we have
\[
\deg g=\sum_i e_{D'_i}\cdot f_{D'_i}.
\]
As the base change is Galois, $e_{D'_i}=e$ and $f_{D'_i}=f$ are independent of $i$.
\end{proof}

\begin{lemma}\label{bcq}
Let $g\colon X'\lr X$ be the morphism induced from the base change $\p=t^q$, where $q$ is a prime $\ne\ch k$. If a component $D$ of the central fibre of $X$ has multiplicity $m$ that is divisible by $q$, then each component of $g^{-1}(D)$ has multiplicity $\frac{m}{q}$. In particular, $g^{-1}(D)$ is reduced under the base change $\p=t^N$ if $m|N$ and $\ch k\nmid N$.
\end{lemma}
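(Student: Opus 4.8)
The plan is to reduce everything to an explicit normalization over the generic point of $D$ and then extract the multiplicity from Lemma \ref{val}. Since the multiplicity of a component of the central fibre is read off at its generic point, and since $g$ is finite (so that every codimension‑one component of $g^{-1}(D)$ dominates $D$), it suffices to perform the base change on the local ring $A:=\oo_{X,\eta_D}$, a DVR with uniformizer $\p_D$ in which $\p=u\,\p_D^{\,m}$ for some unit $u\in A^{\times}$. Here $R\lr R':=R[t]/(t^q-\p)$ is Eisenstein at $\p$, so $R'$ is a DVR, totally and (as $q\ne\ch k$) tamely ramified over $R$, with uniformizer $t$ and $\deg g=q$; write $m=qm'$ with $m'\ge 1$.

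Next I would identify the normalization of $A[t]/(t^q-\p)$ explicitly. In its total ring of fractions set $s:=t\,\p_D^{-m'}$; then $s^q=\p\,\p_D^{-m}=u$, so $s$ is integral over $A$, and $B:=A[s]/(s^q-u)$ contains $A[t]/(t^q-\p)$, the inclusion becoming an isomorphism after inverting $\p_D$, hence $B$ and $A[t]/(t^q-\p)$ have the same total ring of fractions. The key point — and the one place where one must \emph{not} pass to an algebraically closed residue field in order to absorb $u$ into a $q$-th power, since $\kappa(\eta_D)$ need not be algebraically closed — is that $X^q-u$ is separable modulo $\p_D$ because $q$ is invertible and $u$ is a unit; equivalently $\Omega^1_{B/A}=0$. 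Thus $A\lr B$ is finite étale, so $B$ is normal and is therefore the localization of $X'$ over $\eta_D$; being finite étale over the DVR $A$ it is a finite product of DVRs $B_i$, each unramified over $A$. The $\Sp B_i$ are exactly the germs at $\eta_D$ of the components $D'_i$ of $g^{-1}(D)$, $\p_D$ is a uniformizer of each $B_i$, and $s\in B_i^{\times}$.

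The multiplicity then drops out: the ramification index of $D'_i$ over $D$ is $e_{D'_i}=v_{B_i}(\p_D)=1$, so Lemma \ref{val} gives $m\cdot 1=q\cdot m_{D'_i}$, i.e. $m_{D'_i}=m/q$ (one can equally read this off directly from $m_{D'_i}=v_{B_i}(t)=m'v_{B_i}(\p_D)+v_{B_i}(s)=m'$). For the final assertion, factor $N=q_1\cdots q_r$ into primes; since $\ch k\nmid N$, no $q_j$ equals $\ch k$, and the base change $\p=t^N$ factors as a tower of prime‑degree base changes of the form just analysed. Ordering the $q_j$ so that those dividing $m$ come first (with multiplicity), the multiplicity over $D$ decreases from $m$ to $1$ after finitely many steps, and the remaining prime‑degree base changes leave a reduced component reduced (again by Lemma \ref{val}, or by repeating the computation above with $m=1$); hence $g^{-1}(D)$ is reduced under $\p=t^N$.

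I expect the main obstacle to lie in the second paragraph: pinning down the normalization correctly. One must argue via separability/étaleness rather than root extraction to conclude simultaneously that $B=A[s]/(s^q-u)$ is already normal, that it genuinely is the normalization (this is where matching total rings of fractions is needed, so that $B$ is not merely a subring of the normalization), and that $g$ is unramified along every component of $g^{-1}(D)$. The remaining steps — the Eisenstein reduction, the bookkeeping in Lemma \ref{val}, and the factorization of $\p=t^N$ into prime steps — are routine.
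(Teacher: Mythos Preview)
Your argument is correct and follows the same skeleton as the paper's: localize at the generic point of $D$, describe the normalization after base change, conclude $e_{D'_i}=1$, and read off $m_{D'_i}=m/q$ from Lemma~\ref{val}. The difference is in how the normalization is pinned down. The paper passes to the completion $\widehat{\oo}_{X,D}$, extracts a $q$-th root of the unit $u$ via Hensel so that the equation becomes $v^q=1$, factors this using the $q$-th roots of unity in $k\subset\kappa(\eta_D)$, and then routes through Lemma~\ref{ef} with a short case split on whether $\#=q$ or $f=q$ to force $e=1$. You stay at the Zariski local ring, skip root extraction altogether, and observe directly that $B=A[s]/(s^q-u)$ is finite \'etale over the DVR $A$; this yields normality and $e=1$ for every factor in one stroke. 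Your route is slightly more robust on exactly the point you flag --- $\kappa(\eta_D)$ need not contain a $q$-th root of $\bar u$, so the paper's Hensel step is a little quick --- and it also dispenses with the case analysis via Lemma~\ref{ef}. The reduction of the ``in particular'' clause to a tower of prime-degree steps is the same in spirit; the paper leaves it implicit.
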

\begin{proof}
Since $X$ is normal, locally formally $D\subset X$ can be described by $\p=ux^m$, where $u$ is a unit in $\h\oo_{X,D}$. Since $q\ne\ch k$, the unit $u$ has a $q$-th root $u'$ in $\h\oo_{X,D}$ by Hensel's lemma, so the equation becomes $\p=(u'x^{\frac{m}{q}})^q$. Making the base change $\p=t^q$, the equation becomes $t^q=(u'x^{\frac{m}{q}})^q$. Its normalization introduces $v=\frac{t}{u'x^{\frac{m}{q}}}$, so the equation then becomes $v^q=1$. Since $k$ is algebraically closed and $q\ne\ch k$, $v^q=1$ has $q$ factors, meaning locally formally $D$ has $q$ preimages. By Lemma \ref{ef} we have $q=\#\cdot e\cdot f$. Since $q$ is a prime, $D$ having $q$ preimages locally means that either $\#=q$ so $D$ has $q$ preimages $D'_1,\cdots,D'_q$ and $e=f=1$, or $\#=1$ meaning that $D$ has one preimage $D'$ that maps $q$ to $1$ onto $D$, $f=q$ and $e=1$. Either case $e=1$. By lemma \ref{val}, we have $m\cdot 1=q\cdot m_{D'_i}$ or $m\cdot 1=q\cdot m_{D'}$. Either case shows that each component of $g^{-1}(D)$ has multiplicity $\frac{m}{q}$.
\end{proof}

\begin{lemma}\cite[2.41.4]{Kol13}\label{bc} 
Let $g\colon X'\longrightarrow X$ be a tamely ramified cover of demi-normal schemes. Let $F$ be a divisor on $X$ whose support contains the branch divisor of $g$, and let $\D$ be a $\Q$-divisor on $X$. Then $F$ is reduced if and only if the following canonical $\Q$-linear equivalence holds: $$K_{X'}+g^*\D+(g^*F)_{\re}\sim_{\Q}g^*(K_X+\D+F).$$
\end{lemma}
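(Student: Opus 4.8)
This is \cite[2.41.4]{Kol13}; here I indicate the argument. The plan is to cancel $g^*\D$ from the two sides, rewrite the remaining identity via the Hurwitz formula for tame covers, and then verify the resulting identity of Weil divisors on $X'$ one prime divisor at a time.

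First I would record the \emph{tame ramification formula}: since $g$ is a tamely ramified cover, for suitably compatible choices of the canonical divisors one has
\[
K_{X'}=g^*K_X+R,\qquad R:=\sum_{D'}(e_{D'}-1)D',
\]
the sum ranging over the prime divisors $D'$ of $X'$ with ramification index $e_{D'}$ along $g$ (so $e_{D'}=1$ for all but finitely many $D'$). This is a codimension-one assertion, where $g$ is modelled on $z\mapsto z^{e_{D'}}$ with $e_{D'}$ prime to $\ch k$; the demi-normal (rather than normal) hypothesis causes no difficulty, since equality of Weil divisors is tested in codimension one, where, by the definition of a ramified cover (\cite[2.39]{Kol13}), $g$ is \'etale away from the branch divisor $B_g$. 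I would also use that pullback of $\Q$-divisors along the ramified cover $g$ is additive and that $g^*D=\sum_{D'\mapsto D}e_{D'}D'$ for a prime divisor $D$ of $X$.

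Granting this, $g^*\D$ cancels and the asserted (canonical) equivalence $K_{X'}+g^*\D+(g^*F)_{\re}\sim_{\Q}g^*(K_X+\D+F)$ amounts to the identity of Weil divisors
\[
R+(g^*F)_{\re}=g^*F.\qquad (\star)
\]
Both sides of $(\star)$ are supported on $g^{-1}(\Supp F\cup B_g)=g^{-1}(\Supp F)$ --- this is the only place the hypothesis $B_g\subseteq\Supp F$ enters --- so it suffices to compare coefficients along each prime divisor $D'$ of $X'$ lying over a prime divisor $D\subseteq\Supp F$. Writing $a_D=\coeff_DF>0$ and $e=e_{D'}$, the coefficient of $D'$ in $g^*F$ is $e\,a_D$, in $(g^*F)_{\re}$ is $1$, and in $R$ is $e-1$; hence $(\star)$ holds along $D'$ exactly when $(e-1)+1=e\,a_D$, i.e. when $a_D=1$. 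For $D'$ lying over $D\not\subseteq\Supp F$ one has $D\notin B_g$, so $e=1$ and all three coefficients vanish, imposing no condition. Therefore $(\star)$ holds if and only if $\coeff_DF=1$ for every prime divisor $D\subseteq\Supp F$, i.e. if and only if $F$ is reduced, which proves the lemma. The only substantive ingredient is the tame ramification formula in the demi-normal setting; the one remaining point deserving care is the identification of the ``canonical $\Q$-linear equivalence'' of the statement with the divisor identity $(\star)$ for the canonical representatives, which is exactly the content of \cite[2.40--2.41]{Kol13}.
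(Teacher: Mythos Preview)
Your proof is correct and follows essentially the same route as the paper's: both apply the tame Hurwitz formula $K_{X'}\sim g^*K_X+R$, cancel $g^*\D$, and reduce to the divisor identity $g^*F-R=(g^*F)_{\re}$ (equivalently your $(\star)$), which is then checked coefficient-by-coefficient to yield $a_D=1$ for every component $D$ of $F$. Your write-up is slightly more explicit about where the hypothesis $B_g\subseteq\Supp F$ enters (to ensure $R$ is supported on $g^{-1}(\Supp F)$), but the argument is the same.
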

\begin{proof}
By the Hurwitz formula \cite[2.41]{Kol13} we have $K_{X'}\sim g^*K_X+\mathcal{R}$, where $\mathcal{R}$ is the ramification divisor, and thus $$K_{X'}+g^*\D+g^*F-\mathcal{R}\sim_{\Q}g^*(K_X+\D+F).$$ We show that $g^*F-\mathcal{R}\sim_{\Q}(g^*F)_{\re}$ iff $F$ is reduced. Write $F=\sum_ia_iF_i$ and $g^*F_i=\sum_je_{ij}F'_{ij}$, where $F'_{ij}$ are the preimages of $F_i$ with ramification indices $e_{ij}$. Then $g^*F-\mathcal{R}=\sum_{i,j}a_ie_{ij}F'_{ij}-\sum_{i,j}(e_{ij}-1)F'_{ij}$ and $(g^*F)_{\re}=\sum_{i,j}F'_{ij}$. Since all $\Q$-linear equivalences here are canonical, we have $g^*F-\mathcal{R}\sim_{\Q}(g^*F)_{\re}\iff(a_i-1)e_{ij}+1=1\iff a_i=1\iff F$ is reduced.
\end{proof}

\begin{lemma}\label{cyclic}
Let $R$ be a Henselian DVR with algebraically closed residue field $k$ and fraction field $K$. If $\ch k\nmid N$, then any field extension $L/K$ of degree $N$ is Galois with cyclic group $G$ of order $N$.
\end{lemma}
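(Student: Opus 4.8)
\textbf{Proof proposal for Lemma \ref{cyclic}.}

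The plan is to show that $L/K$ is separable, then identify it with a totally ramified extension obtained by adjoining an $N$-th root of a uniformizer, and finally check that this extension is Galois and cyclic by producing the full group of $N$-th roots of unity inside $K$. First I would argue separability: since $R$ is a DVR and $\ch k\nmid N$, if $L/K$ had a nontrivial inseparable part then $\ch K=\ch k=p$ would divide $N=[L:K]$ (as $[L:K]$ is divisible by the degree of the inseparable subextension, a power of $p$), contradicting $\ch k\nmid N$. So $L/K$ is separable of degree $N$.

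Next I would locate the ramification. Let $\p$ be a uniformizer of $R$, and let $R_L$ be the integral closure of $R$ in $L$; because $R$ is Henselian, $R_L$ is again a DVR with residue field a finite extension of $k=\overline k$, hence equal to $k$, so $L/K$ is totally ramified of ramification index $e=N$. Writing $\varpi$ for a uniformizer of $R_L$, one has $\p=u\varpi^{N}$ for some unit $u\in R_L^\times$; since the residue field is algebraically closed and $\ch k\nmid N$, Hensel's lemma gives an $N$-th root of $u$ in $R_L^\times$, so after absorbing it we may assume $\p=\varpi^{N}$. Thus $K(\varpi)\subseteq L$ has $[K(\varpi):K]=N=[L:K]$, whence $L=K(\varpi)=K[t]/(t^{N}-\p)$.

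Finally I would check that $L/K$ is Galois with cyclic group of order $N$. Because $\ch k\nmid N$ and $k=\overline k$ is Henselian, $K$ contains a primitive $N$-th root of unity $\zeta_N$ (lift a primitive $N$-th root of unity from $k$ via Hensel's lemma applied to $t^{N}-1$, whose reduction is separable). Then the $N$ distinct elements $\zeta_N^{i}\varpi$, $0\le i<N$, all lie in $L$ and are roots of $t^{N}-\p$, so $L$ is the splitting field of the separable polynomial $t^{N}-\p$ over $K$; hence $L/K$ is Galois. The map sending $\sigma\in G=\mathrm{Gal}(L/K)$ to the root of unity $\sigma(\varpi)/\varpi$ is an injective homomorphism $G\hookrightarrow \mu_N(K)\cong \Z/N\Z$, and since $|G|=N$ it is an isomorphism, so $G$ is cyclic of order $N$. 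The only genuinely delicate point is the reduction to the totally ramified Kummer form $\p=\varpi^N$ — i.e. combining the Henselian hypothesis (to force the residue field of $R_L$ to stay equal to $k$, hence $e=N$) with Hensel's lemma (to extract the $N$-th root of the unit $u$ and the root of unity $\zeta_N$); once that normal form is in hand, the Galois and cyclicity statements are the standard Kummer-theory argument.
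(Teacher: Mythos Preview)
Your proposal is correct and follows essentially the same route as the paper: reduce to the Kummer normal form $\p=\varpi^{N}$ by combining the Henselian hypothesis (forcing $R_L$ local with residue field $k$, hence $e=N$) with Hensel's lemma (extracting the $N$-th root of the unit), and then conclude Galois cyclicity from the presence of a primitive $N$-th root of unity. You are slightly more explicit than the paper in two places---you justify separability (the paper simply asserts it when invoking \cite[\href{https://stacks.math.columbia.edu/tag/09E8}{Tag 09E8}]{stacks-project}) and you spell out the Kummer isomorphism $G\hookrightarrow\mu_N(K)$---but these are elaborations of the same argument, not a different approach.
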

\begin{proof}
Since $R$ is Henselian, the integral closure $R_L$ of $R$ in $L$ is a Henselian DVR. By \cite[\href{https://stacks.math.columbia.edu/tag/09E8}{Tag 09E8}]{stacks-project}, as $L/K$ is separable, $R_L$ is local, and $k=\ol k$, we have 
$$N=[L:K]=\sum_ie_if_i=ef=e.$$
Thus $L/K$ is totally ramified. If $\p,t$ are uniformizers of $R$ and $R_L$ respectively, then $\p=ut^N$, where $u$ is a unit in $R_L$. By Hensel's lemma, since $\ch k\nmid N$ and $k=\ol k$, the unit $u$ has an $N$-th root $u^{\frac{1}{N}}$ in $R_L$. Replacing $t$ with $u^{\frac{1}{N}}t$, $\p=ut^N$ becomes $\p=t^N$, i.e. $L=\frac{K[t]}{\p-t^N}$. Since $R_L$ contains a primitive $N$-th root of unity $\xi_N$, $L/K$ is Galois, and the automorphism $\sigma$ of $L$ that fixes $K$ and sends $t$ to $\xi_N t$ generates the Galois group $G$. Hence $G=\langle\sigma\rangle$ is cyclic of order $N$.
\end{proof}

\begin{lemma}\cite[Lemma 4.4]{MP04}\label{MP}
Let $I\subseteq (0,1]$ be any subset that contains $1$. Then $I$ is a DCC set if and only if $D(I)$ is a DCC set, where $$D(I)=\{\,a=\frac{m-1+f}{m}\,|\, m\in\mathbb{N},\, f\in I_+\cap[0,1]\,\}$$ 
and $I_+$ denotes the union of $\{0\}$ and the set of all finite sums of elements in $I$.
\end{lemma}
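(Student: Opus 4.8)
The plan is to treat the two directions separately. The implication ``$D(I)$ DCC $\Rightarrow$ $I$ DCC'' is immediate: taking $m=1$ in the definition shows $I_+\cap[0,1]\subseteq D(I)$, and since every element of $I$ lies in $(0,1]$ and is a one-term sum of elements of $I$, we get $I\subseteq I_+\cap[0,1]\subseteq D(I)$, and a subset of a DCC set is DCC. So the real content is the converse, and for that I would isolate the intermediate claim that \emph{$I_+\cap[0,1]$ is DCC whenever $I$ is}.

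\emph{Step 1 (the claim).} Since $I\subseteq(0,1]$ is DCC and nonempty (as $1\in I$), it has a minimum $\delta>0$, for otherwise one could extract a strictly decreasing sequence converging to $\inf I$. Hence any sum $a_1+\cdots+a_j$ of elements of $I$ lying in $[0,1]$ has $j\le 1/\delta$, so $I_+\cap[0,1]\subseteq S_{k_0}$ where $S_k:=\{a_1+\cdots+a_j : 0\le j\le k,\ a_i\in I\}$ and $k_0=\lfloor 1/\delta\rfloor$. I would show each $S_k$ is DCC by induction on $k$, using the standard fact that a subset of $\R$ is DCC if and only if every sequence in it has a non-decreasing subsequence: given a sequence in $S_{k+1}$, write $x_n=a_n+y_n$ with $a_n\in I\cup\{0\}$ and $y_n\in S_k$, extract a subsequence along which $a_n$ is non-decreasing (as $I\cup\{0\}$ is DCC), then a further subsequence along which $y_n$ is non-decreasing (inductive hypothesis), so that $x_n$ is non-decreasing. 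Then $S_{k_0}$ is DCC, hence so is its subset $I_+\cap[0,1]$.

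\emph{Step 2 (reducing to Step 1).} Suppose $D(I)$ were not DCC, and pick a strictly decreasing sequence $a_i=\frac{m_i-1+f_i}{m_i}=1-\frac{1-f_i}{m_i}$ with $m_i\in\mathbb{N}$ and $f_i\in I_+\cap[0,1]$. Each $a_i\in[0,1]$, so for $i\ge 2$ we have $a_i<a_1\le 1$, and the numbers $g_i:=1-a_i=\frac{1-f_i}{m_i}$ are strictly increasing with $g_i\ge g_2>0$ (as $a_2<a_1\le 1$). Since $f_i\ge 0$ gives $g_i\le 1/m_i$, we get $m_i\le 1/g_2$ for $i\ge 2$, so the $m_i$ take only finitely many values and on some subsequence $m_i\equiv m$ is constant. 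Along that subsequence $a_i=\frac{m-1+f_i}{m}$ is still strictly decreasing, which forces $f_i$ to be strictly decreasing in $I_+\cap[0,1]$, contradicting Step 1. Hence $D(I)$ is DCC.

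The only substantive point is Step 1; everything else is elementary manipulation. The thing to be careful about is to argue via the monotone-subsequence characterization of the DCC property rather than trying to manufacture decreasing subsequences directly, and to keep track of the number of summands so that the induction stays inside the bounded sets $S_k$, where it actually closes.
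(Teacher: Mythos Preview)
The paper does not actually supply a proof of this lemma; it is quoted verbatim from \cite[Lemma 4.4]{MP04} and used as a black box, so there is no in-paper argument to compare against. That said, your proposal is a correct and self-contained proof. The easy direction via $I\subseteq I_+\cap[0,1]\subseteq D(I)$ is fine, and for the converse your two-step strategy is exactly the standard one: first bound the number of summands using $\min I>0$ and show by induction (via the monotone-subsequence characterization) that $S_k$ is DCC for each fixed $k$, hence $I_+\cap[0,1]$ is DCC; then, given a strictly decreasing sequence in $D(I)$, use $1-a_i=(1-f_i)/m_i\le 1/m_i$ together with $1-a_i\ge 1-a_2>0$ to bound the $m_i$, pass to a constant-$m$ subsequence, and contradict Step~1. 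The only cosmetic point is that you could start the bound at $i\ge 2$ directly without the aside about $a_1\le 1$, but nothing is wrong as written.
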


\bibliographystyle{amsalpha}

\newcommand{\etalchar}[1]{$^{#1}$}
\providecommand{\bysame}{\leavevmode\hbox to3em{\hrulefill}\thinspace}
\providecommand{\MR}{\relax\ifhmode\unskip\space\fi MR }
\providecommand{\MRhref}[2]{
  \href{http://www.ams.org/mathscinet-getitem?mr=#1}{#2}
}

\end{document}